\documentclass[a4paper,10pt]{article}

\usepackage{todonotes}
\newcommand{\mh}[1]{\todo[inline,color=teal!30]{MH: #1}}
\newcommand{\h}[2][cyan]{\emph{\textcolor{#1}{#2}}}
\newcommand{\hy}[1]{\todo[inline,color=gray!30]{HY: #1}}

\usepackage{a4wide}
\usepackage[colorlinks]{hyperref}
\usepackage{amsmath}
\usepackage{amsthm}
\usepackage{amsfonts}
\usepackage{amssymb}
\usepackage{mathtools}
\usepackage{algorithm}
\usepackage{algorithmic}
\usepackage{epsfig}
\usepackage[utf8]{inputenc}
\usepackage{graphics}
\usepackage{caption}
\usepackage{subcaption}
\usepackage{xcolor}
\usepackage{booktabs}
\usepackage{tcolorbox}
\captionsetup{compatibility=false}
\usepackage{listings}
\usepackage{color} 
\usepackage{enumitem}
\usepackage{tikz}
\usetikzlibrary{calc,positioning,fit,backgrounds}
\usetikzlibrary{3d}
\definecolor{mygreen}{RGB}{28,172,0} 
\definecolor{mylilas}{RGB}{170,55,241}
\usepackage{fancyhdr} 
\pagestyle{fancy} 
\rhead{ \large \textbf{}}
\lhead{ \textbf{LDG for Dirichlet Boundary Control}}

\newcommand{\set}[2]{\left\{{#1}\,:~{#2}\right\}}
\newcommand {\average}[1] {\mbox{$\left\{\!\!\left\{ #1 \right\}\!\!\right\}$}}
\newcommand {\jump}[1] {\mbox{$\left[\!\left[ #1 \right]\!\right]$}}

\DeclareMathOperator{\sign}{sign}

\newcommand{\Uad}{{U^{\textup{ad}}}}
\newcommand{\Uadh}{{U^{\textup{ad}}_h}}
\newcommand{\hF}{{\frac{1}{2}}}
\newcommand{\cT}{{\mathcal{T}}}
\newcommand{\cE}{{\mathcal{E}}}
\newcommand{\bq}{{\mathbf{q}}}
\newcommand{\br}{{\mathbf{r}}}
\newcommand{\bw}{{\mathbf{w}}}
\newcommand{\bp}{{\mathbf{p}}}
\newcommand{\bn}{{\mathbf{n}}}
\newcommand{\bW}{{\mathbf{W}}}

\newtheorem{theorem}{Theorem}[section] 
\newtheorem{lemma}[theorem]{Lemma}
\newtheorem{assumption}[theorem]{Assumption}

\newtheorem{remark}[theorem]{Remark}
\numberwithin{theorem}{section}

\newcommand{\x}[2][blue]{#2}
\newcommand{\z}[2][blue]{#2}

\author{Peter Benner\footnote{Max Planck Institute for Dynamics of Complex Technical Systems, Magdeburg, Germany, \url{benner@mpi-magdeburg.mpg.de}} \and Michael Hinze\footnote{Mathematisches Institut, Universität Koblenz-Landau, Koblenz, Germany, \url{hinze@uni-koblenz.de}} \and Hamdullah Yücel \footnote{Institute of Applied Mathematics, Middle East Technical University, Ankara, Türkiye, \url{yucelh@metu.edu.tr}}\\[1cm] Dedicated to our long-standing, highly esteemed colleague and friend Ronald Hoppe}

\title{A Local Discontinuous Galerkin Method for Dirichlet Boundary Control Problems}


\begin{document}

\maketitle

\begin{abstract}
In this paper, we consider control constrained $L^2-$Dirichlet boundary control of a convection-diffusion equation on a two dimensional convex polygonal domain. We discretize the control problem based on the local discontinuous Galerkin method with piecewise linear ansatz functions for the flux and potential. We derive a priori error estimates for the full as well as for the variational discrete control approximation. We present a selection of numerical results to demonstrate the performance of our approach and to underpin the theoretical findings.
\end{abstract}


\section{Introduction}\label{sec:introduction}

Let $\Omega \subset \mathbb{R}^2 $ be a bounded, convex open domain with a Lipschitz boundary $\Gamma$. In the present work, we are interested in the numerical analysis of the Dirichlet boundary control problem
\begin{equation}\label{p1}
       \underset{u \in \Uad}{\hbox{ minimize }} \;  \frac{1}{2}\|y-y^d\|^{2}_{0,\Omega} + \frac{\omega}{2} \|u\|^{2}_{0,\Gamma} =: J(y,u)
\end{equation}
subject to
\begin{equation}\label{p2}
 \begin{split}
  \nabla \cdot ( - \epsilon \nabla y + \beta  y )  + \alpha y =& \, f      \qquad \qquad \hbox{in   }  \Omega,   \\
     y=& \, u    \qquad \qquad  \hbox{on   }   \Gamma, 
\end{split}
\end{equation}
where the admissible control set $\Uad$  is specified by
\begin{align}\label{p3}
\Uad := \{ u \in L^{2}(\Gamma): \;  u_{a} \leq u(x) \leq u_{b} \;\; \hbox{a.e. on} \;\; \Gamma\},
\end{align}
with the real numbers $u_a$ and $u_b$ satisfying $u_a \leq u_b$. For the  discretization of this control problem, we consider  variational discretization \cite{MHinze_2005a}, as well as  a  discontinuous Galerkin approximation of the control variable \x{with piecewise linear finite elements}. In both cases, we employ the local discontinuous Galerkin (LDG) method for the discretization of the state. \x{For the notation and conventions, we refer to Sections~\ref{sec:regularity} and~\ref{sec:ldg}}.

\paragraph*{Contribution:} For both control schemes, we present a general estimate for the control and state errors. For the variational discrete scheme, in Theorem \ref{thm:nodisc_adjoint_representation} for $u\in H^{s}(\Gamma)$ $(s\in [0,1])$ we obtain the error relation
\[\| u - u_h \|_{0,\Gamma}  + \|y-y_h\|_{0,\Omega} \lesssim \|y-y_h(u)\|_{0,\Omega} + \|(\bp_h(u) - \bp)\cdot\bn\|_{0,\Gamma} + \|z_h(u) - z\|_{0,\Gamma} =:E(y,\bp,z),\]
    which only involves Galerkin approximation errors for the primal potential $y$, as well as for the adjoint flux $\bp$ and the adjoint potential $z$.
For the fully discrete case, in Theorem \ref{thm:fully-discrete} we obtain the error relation
\begin{multline*}
    \| u - u_h \|_{0,\Gamma}  + \|y-y_h\|_{0,\Omega} \lesssim E(y,\bp,z) + \\ +\|\pi_hu-u\|_{0,\Gamma} +\big(\|(\bp_h - \bp(y_h))\cdot\bn\|_{0,\Gamma}+\|z_h - z(y_h)\|_{0,\Gamma}\big)^{1/2}\|\pi_hu-u\|_{0,\Gamma}^{1/2} + \|\pi_h u - u\|_{-s,\Gamma}^{1/2},
    \end{multline*}
    which, in addition to the Galerkin approximation errors for $y,z$, and $\bp$, also contains approximation errors for the auxiliary adjoint flux $\bp(y_h)$ and the potential $z(y_h)$, as well as projection errors associated with the Carstensen quasi-interpolation operator $\pi_h$. \x{It is noted that here and throughout the paper, the notation $A \lesssim B$ means that there exists a positive constant $C$, independent of the relevant discretization parameters, such that $A \leq C B$.} We would like to emphasize that our numerical analysis is inspired by the techniques introduced in \cite[Chapter 3]{MHinze_RPinnau_MUlbrich_SUlbrich_2009a} and does not rely on the stability and continuity properties of the discrete solution operator associated with the LDG approximation scheme, thus differing from common practices in the literature; compare, e.g., \cite{WGong_NYan_2011b,JPfefferer_BVexler_2025}. In the fully discrete case, for the error estimation we only rely on the uniform boundedness of the discrete optimal states $y_h$ in the $L^2(\Omega)$ norm, which can be easily deduced from the structure of the cost functional together with the boundedness of the continuous solution operator. For our LDG approach with \x{linear finite} elements for the approximation of the  discrete fluxes and potentials, we obtain
    \[
\| u - u_h \|_{0,\Gamma}  + \|y-y_h\|_{0,\Omega} \lesssim h^{1/2}
    \]
for both the fully discrete and the variational discrete approaches. This is in accordance with the results reported in \cite{WGong_WHu_MMateos_JSingler_XZhang_YZhang_2018} for \x{the case  where} piecewise constant fluxes, piecewise linear potentials, and piecewise linear controls for the approximation of the control problem are utilized and convergence with $h^{1/2}$ is shown, regardless of the smoothness of the involved states and adjoint variables. One conclusion from the analysis here can be that in the context of DG methods, stability can only be achieved at the expense of accuracy.

Finite element approximation of optimal control problems has been an active research area in engineering design for a long time and has been extensively studied in many scientific and engineering applications. We refer to the monographs   \cite{MHinze_RPinnau_MUlbrich_SUlbrich_2009a,FTroeltzsch_2010a,BVexler_DMeidner_2025}, as well as the references therein for the theory  of  optimal control problems and for the development of numerical methods. Compared to distributed control problems, in which the control acts throughout the domain $\Omega$, there has been relatively less research on boundary control problems. Moreover,  the majority of existing studies on  boundary control problems focus on Neumann boundary control problems (see, e.g., \cite{NArada_ECasas_FTroeltzsch_2002a,ECasas_MMateos_2008a,TGeveci_1979,MHinze_UMatthes_2009a}), where the control is applied through Neumann boundary conditions rather than through conditions of the form \eqref{p2}. In contrast to distributed and Neumann control problems, Dirichlet boundary control problems pose additional challenges in both theoretical and numerical analysis, as the Dirichlet boundary data cannot be directly incorporated into a standard variational formulation. As a result, various alternative formulations have been proposed in the literature.

The first approach involves approximating the nonhomogeneous Dirichlet boundary condition using either a Robin boundary condition or a weak boundary penalization method; see, e.g., \cite{NArada_JPRaymond_2002a,FBBelgacem_HEFekih_HMetoui_2003}. The second one, which is adopted in the present study, employs the
$L^2(\Gamma)$  norm. In this case, the traditional finite element method treats the state variable in a very weak sense, as the nonhomogeneous Dirichlet boundary condition is prescribed only in  $L^2(\Gamma)$. Extensive numerical studies have been conducted on elliptic Dirichlet boundary control problems using this formulation. We refer to \cite{TApel_MMateos_JPfefferer_ARosch_2018,ECasas_JPRaymond_2006b,KDeckelnick_AGunther_MHinze_2009,SMay_RRannacher_BVexler_2013,MMateos_2018,JPfefferer_BVexler_2025} for a priori error estimates and to \cite{TApel_MMateos_JPfefferer_ARosch_2015} for the regularity analysis. In  \cite{ECasas_JPRaymond_2006b}, a control-constrained optimal control problem governed by a semilinear elliptic equation on a convex polygonal domain was investigated, where error estimates of order $h^s$ for both the control and state variables were derived with $s < \min(1, \pi/(2\theta))$ and $\theta$ denoting the largest interior angle of the domain. Subsequently, an enhanced error estimate of order $h^{3/2}$ for the control variable in the case of smooth domains  was presented in \cite{KDeckelnick_AGunther_MHinze_2009} by exploiting the superconvergence properties of regular triangulations. Then, improved convergence rates for the state variable in the unconstrained case on convex domains were obtained in \cite{SMay_RRannacher_BVexler_2013} through a duality argument combined with a control estimate in a norm weaker than $L^2(\Gamma)$. Later, error estimates of order $h^s$, with $s < \min(1, \pi/\theta - 1/2)$ for general meshes and $s < \min(3/2, \pi/\theta - 1/2)$ for superconvergent meshes, were established for the control variable on general polygonal domains in \cite{TApel_MMateos_JPfefferer_ARosch_2018}. Another common approach to formulate Dirichlet boundary control problems is the energy space method, where the $L^2$-norm in the cost functional is replaced by the $H^{1/2}$-norm. This choice of control space allows for a standard weak formulation of the state equation and yields optimal estimates on a convex polygonal domain; however, it introduces an additional operator, namely the Steklov–Poincaré operator, through the harmonic extension of the given Dirichlet boundary data; see, e.g., \cite{GOf_TXPhan_OSteinbach_2010,MWinkler_2020}. In a similar fashion, the $H^1$-norm, which naturally results in harmonic control, is utilized in \cite{SChowdhury_TGudi_AKNandakumaran_2017,MKarkulik_2020} to regularize the control variable. Further, it is noted in \cite{SDu_XHe_2023} that a fourth-order formulation of the adjoint variable, derived by eliminating the control and state variables, can be employed to address Dirichlet boundary control problems. 

Unlike continuous finite element approximations of the state variables, alternative discretization techniques, such as the mixed finite element method \cite{WGong_NYan_2011b} and the hybridizable discontinuous Galerkin (HDG) method \cite{WWHu_JGShen_JRSingler_YWZhang_XBZheng_2020}, have been employed to overcome the challenges arising from the variational formulation of continuous finite element methods. In \cite{WGong_NYan_2011b}, error estimates of order $h^{1-1/s}$ with $s \geq 2$ were established  for polygonal domains and of order $h|\ln h|$ for smooth domains, where the classical $RT_0$ mixed finite element method was used for the approximation. In \cite{WWHu_JGShen_JRSingler_YWZhang_XBZheng_2020}, an HDG method with discrete fluxes in $P_k$ and potentials in $P_{k+1}$ for $k\ge 0$ was applied. Error estimates of order  $h^s$ for $s < \min(3/2, \pi/\theta -1/2)$ for $k\ge 1$, and of order $h^{1/2}$ in the case $k=0$, were reported. \x{Here and throughout the paper, $k$ denotes the polynomial degree of the finite element space.}

All of the aforementioned studies concentrate on Dirichlet boundary control problems for elliptic equations. However, such control problems are also of significant importance in various applications governed by fluid dynamics models, including the Stokes \cite{WGong_WHu_MMateos_JRSingler_YZhang_2020,WGong_MMateos_JSingler_YZhang_2022} and Navier–Stokes equations \cite{AVFursikov_MDGunzburger_LSHou_1998a,MHinze_KKunisch_2004}. Prior to analyzing the numerical behavior of complex optimal control problems governed by fluid dynamics models, it is essential to first study optimization problems constrained by convection–diffusion equations. Numerical studies for unconstrained Dirichlet boundary control problems governed by  convection-diffusion equations have been carried out in \cite{WGong_WHu_MMateos_JSingler_XZhang_YZhang_2018,WHu_MMateos_JSingler_YZhang_2018,HChen_JRSingler_YZhang_2019} using an HDG method, and in \cite{CCorekli_2022} employing a symmetric interior penalty Galerkin approach. 
In the present work, we focus on the numerical analysis of Dirichlet boundary control problems governed by a convection–diffusion equation 
with $L^2$--boundary controls subject to pointwise bounds on the control.  The local discontinuous Galerkin (LDG) method is employed as a discretization technique to address the variational challenges that arise when the control space is taken as $L^2(\Gamma)$ and to leverage the strengths of discontinuous Galerkin methods for convection–diffusion equations. The LDG method is one of several discontinuous Galerkin approaches that have been extensively studied, particularly for convection–diffusion problems, due to its versatility in handling a wide range of applications and advantageous features such as local conservativity and strong locality. \z{Like the HDG method  introduced in \cite{BCockburn_JGopalakrishnan_RLazarov_2009}, the LDG method rewrites higher-order PDEs as systems of first-order equations and employs numerical fluxes to weakly couple discontinuous solutions across element interfaces. However, the two methods differ fundamentally in how these fluxes are constructed and how global coupling is achieved. In LDG, numerical fluxes are defined directly between neighboring elements using information from both sides of each interface, allowing considerable flexibility (e.g., alternating or upwind choices) and making the method particularly robust for a wide range of problems, including convection–diffusion systems. This direct coupling avoids the introduction of additional globally coupled trace variables, as required in HDG, and keeps the formulation conceptually simpler. In contrast, HDG defines its numerical fluxes through a single-valued hybrid variable on element interfaces. This hybrid variable becomes the only globally coupled unknown, and the volumetric element degrees of freedom are recovered locally via static condensation, significantly reducing the size and bandwidth of the global system at the cost of increased formulation complexity.} For studies on the use of discontinuous Galerkin methods in convection-dominated PDEs, we refer the reader to \cite{ABuffa_TJRHughes_GSangalli_2006a,PCastillo_BCockburn_DSchotzau_CSchwab_2002,JCesenek_MFeistauer_2012,YCheng_CWShu_2010,BCockburn_CWShu_1998b}. In the present work, we use the results from \cite{PCastillo_BCockburn_IPergugia_DSchotzau_2000,BCockburn_JGopalakrishnan_FJSayas_2010,JJiang_NJWalkington_YYue_2025} to estimate the approximation errors of the state and adjoint variables. Finally, let us mention that optimal control problems governed by convection–diffusion equations were also investigated in  \cite{PBenner_HYucel_2017,DLeykekhman_MHeinkenschloss_2012a,HYucel_PBenner_2015a,HYucel_MHeinkenschloss_BKarasozen_2013,ZZhou_XYu_NYan_2014a}.


The rest of this paper is organized as follows. \x{In the following section, we discuss the regularity of the solutions and present the optimality system.} Section~\ref{sec:ldg} is devoted to the formulation of the LDG scheme for approximating the Dirichlet boundary control problem. In Section \ref{sec:estimate}, we establish a priori error estimates for the LDG approximation on a convex polygonal domain. These estimates are derived for two distinct discretization strategies used in the optimal control problem: the variational discretization approach and the piecewise linear discretization approach. Section \ref{sec:numeric} presents numerical experiments to support and validate the theoretical findings. Finally, concluding remarks and further discussion are provided in  Section~\ref{sec:conclusion}.

\section{Regularity and optimality system}\label{sec:regularity}

Throughout this paper, we adopt the standard notation $W^{m,p}(\Omega)$ (see, e.g., \cite{RAAdams_1975}) for Sobolev spaces  equipped with the norm $\| \cdot \|_{m,p,\Omega}$ and seminorm $| \cdot |_{m,p,\Omega}$ for $m \geq 0$ and $1 \leq p \leq \infty$ on an open bounded domain $\Omega$ in $\mathbb{R}^2$ with  boundary $\Gamma=\partial \Omega$. We denote $W^{m,2}(\Omega)$ by $H^m(\Omega)$  with  norm $\| \cdot \|_{m,\Omega}$ and seminorm $| \cdot |_{m,\Omega}$. \x{The spaces of square-integrable functions over $\Omega$ and $\Gamma$ are represented by $L^2(\Omega)$ and $L^2(\Gamma)$, respectively, with norms denoted by $\| \cdot \|_{0,\Omega}$ and $\| \cdot \|_{0,\Gamma}$. The inner products on $L^2 (\Omega)$ and $L^2 (\Gamma)$ are defined by}
\[
(v, w)_{\Omega} = \int_{\Omega} v \, w \, dx 
\qquad \forall v, w \in L^2(\Omega),
\quad
\text{and}
\quad
\langle v, w \rangle_{\Gamma} = \int_{\Gamma} v \, w \, ds 
\qquad \forall v, w \in L^2(\Gamma),
\]
respectively. Note that $H^1_0(\Omega) = \{ v \in H^1(\Omega):  v=0 \; \hbox{on}  \; \Gamma \}$ and $H(\mathrm{div},\Omega) := \left\{ \mathbf{v} \in (L^2(\Omega))^2: \nabla \cdot \mathbf{v} \in L^2(\Omega) \right\}$. Moreover, $C$ denotes  a generic positive constant independent of the mesh size $h$  and may differ in various estimates. For clarity and convenience, we assume the following conditions on the domain $\Omega$ and the given data, which are  valid throughout the paper:

\begin{assumption}\label{assumption_data}
We assume that $\Omega$ is a convex polygonal domain in $\mathbb{R}^2$ with Lipschitz boundary $\Gamma$ and $\theta \in [\pi/3, \pi)$ represents the largest interior angle of $\Omega$. Diffusion and regularization parameters denoted by $\epsilon$ and $\omega$ are positive constants. The velocity field $\beta \in \big(W^{1,\infty}(\Omega)\big)^2$ complies with incompressibility condition, that is, $\nabla \cdot \beta=0$. The reaction function $\alpha \in L^{\infty}(\Omega)$ satisfies $\alpha(x) \geq 0$. For the source function $f$ and the desired state $y^d$, we assume that $f, y^d \in L^2(\Omega)$.
\end{assumption}

Owing to the low a priori regularity of the control $u\in U:= L^2(\Gamma)$ in the optimal control problem \eqref{p1}, the state equation \eqref{p2} is understood in a very weak sense, i.\,e., we seek a solution $y \in L^2(\Omega)$ satisfying
\begin{align}\label{p4}
-\epsilon (y, \Delta v )_\Omega - (y, \beta \cdot \nabla v )_\Omega + (\alpha y, v)_\Omega = (f,v)_\Omega - \epsilon \langle u, \partial_n v \rangle_{\Gamma}
\end{align}
for all $v \in H_0^1 (\Omega) \cap H^2(\Omega)$ with $\partial_n v := \nabla v \cdot \bn$ and $\bn$ representing the unit outward normal vector to $\Gamma$. It is well known that problem \eqref{p4} for $f\in L^2(\Omega)$ and $u \in L^2(\Gamma)$ admits a unique solution $y\in H^{1/2}(\Omega)$, which  satisfies
  \begin{equation}\label{p5}
    \|y\|_{H^{1/2}(\Omega)} \leq C \big( \|f\|_{0,\Omega} + \|u\|_{0,\Gamma} \big),
\end{equation}
see, e.g., \cite{ECasas_MMateos_JPRaymond_2009, KKunisch_BVexler_2007c,JPfefferer_BVexler_2025}.

Based on the very weak formulation of the state equation \eqref{p4}, the optimal control problem (\ref{p1})-(\ref{p2}) can be stated as follows

\begin{equation}\label{p8}
\underset{(y,u) \in L^2(\Omega) \times U^{ad}}{\hbox{ minimize }} \;  J(y,u) \hbox{ subject to } \eqref{p4}.
\end{equation}
Using the standard arguments in the present setting, it can be shown that problem \eqref{p8} admits a unique solution $(y,u)\in H^1(\Omega)\times (H^{1/2}(\Gamma)\cap \Uad)$, which, together with the unique adjoint state $z\in H^2(\Omega)\cap H^1_0(\Omega)$, weakly solves the following optimality system:
\begin{subequations}\label{p10}
\begin{equation}\label{p10a}
    \begin{split}
        \nabla \cdot ( - \epsilon \nabla y + \beta  y )  + \alpha y =& \, f      \qquad \qquad \quad \hbox{in    }  \Omega,   \\
        y=& \,u    \qquad \qquad \quad \hbox{on    }   \Gamma, 
    \end{split}
\end{equation}     
\begin{equation}\label{p10c}
    \begin{split}
  \nabla \cdot ( - \epsilon \nabla z - \beta  z )  + \alpha z = \,&  y-y^d      \qquad \quad \hbox{in    }  \Omega,   \\
     z=& \,0    \qquad \qquad  \quad \hbox{on    }   \Gamma,
    \end{split}
\end{equation}       
and 
\begin{equation}\label{p10e}
    \langle \omega u - \epsilon \partial_n z, w -u  \rangle_{\Gamma} \geq  0  \qquad \quad \quad \;\;\;\;\;  \forall  w \in \Uad.
\end{equation}
\end{subequations}
Since $z\in H^2(\Omega)\cap H^1_0(\Omega)$, it follows for convex polygonal domains from, e.g., \cite[Theorem~2.5]{JPfefferer_BVexler_2025} that $\partial_n z \in H^{1/2}(\Gamma)$. The variational inequality \eqref{p10e} is equivalent to  $u = \mathcal P_{\Uad} \left(\frac{\epsilon}{\omega} \,  \partial_n z \right) \in H^{1/2}(\Gamma)$; compare, e.g., \cite[Lemma 3.3]{KKunisch_BVexler_2007c}. Here, $\mathcal P_{U^{\textup{ad}}}$ denotes the orthogonal projection in $L^2(\Gamma)$ onto the admissible set $\Uad$. Since \eqref{p10a} admits a unique very weak solution $y(u)$ for every $u\in\Uad$, the reduced cost functional $\widehat J(u):= J(y(u),u)$ is well-defined. Furthermore, $\widehat J'(u) = \omega u-\epsilon \partial_n z$, where $z$ denotes the unique adjoint state satisfying \eqref{p10c} with $y=y(u)$.



To apply the local discontinuous Galerkin (LDG) method for approximating the solution of the Dirichlet boundary control problem \eqref{p1}--\eqref{p2}, we employ a mixed formulation of the optimality system \eqref{p10a}--\eqref{p10e}. This is possible since the optimal control of problem \eqref{p8} satisfies $u\in H^{1/2}(\Gamma)$, and in this case it follows from \eqref{p4} that $\nabla y \in H(\mathrm{div},\Omega)$, and consequently $\nabla y\cdot \bn \in H^{-1/2}(\Gamma)$; compare \cite{WWHu_JGShen_JRSingler_YWZhang_XBZheng_2020, WGong_NYan_2011b}. One now has that with the unique solution  $(y,z,u)$ of the optimality system \eqref{p10a}--\eqref{p10e} the functions $y,\bq,z,\bp,u$, with $\bq:=-\epsilon^{\hF} \nabla y$, $\bp:= \epsilon^{\hF} \nabla z$, $\bq, \bp \in H(\mathrm{div},\Omega)$, form the unique weak solution of the mixed system given by
\begin{subequations}\label{p13}
\begin{align}\label{p13a}
\begin{aligned}
\nabla \cdot (\beta\, y + \epsilon^{\hF} \mathbf{q}) + \alpha\, y &= f 
    &&\text{in }\Omega,\\[2mm]
\mathbf{q} &= -\,\epsilon^{\hF} \nabla y
    &&\text{in }\Omega,\\[2mm]
y &= u 
    &&\text{on }\Gamma .
\end{aligned}
\end{align}
\begin{align}\label{p13b}
\begin{aligned}
\nabla \cdot (-\beta\, z - \epsilon^{\hF} \mathbf{p}) + \alpha\, z &= y - y^{d}
    &&\text{in }\Omega,\\[2mm]
\mathbf{p} &= \epsilon^{\hF} \nabla z
    &&\text{in }\Omega,\\[2mm]
z &= 0
    &&\text{on }\Gamma .
\end{aligned}
\end{align}
%
\begin{align}\label{p13c}
\langle\, \omega u - \epsilon^{\hF} \mathbf{p}\cdot \bn,\, w - u\, \rangle_{\Gamma} \;\ge\; 0,
    \qquad \forall w \in \Uad.
\end{align}
\end{subequations}


To ensure the approximation properties of the LDG method, specific regularity assumptions on the involved states and fluxes are required. For the solution of our optimal Dirichlet boundary control problem on polygonal domains, a detailed study can be found in \cite{TApel_MMateos_JPfefferer_ARosch_2015}. In this respect, we obtain the following theorem from e.g., \cite[Theorem~2.10]{WHu_MMateos_JSingler_YZhang_2018}, which is also valid in our control constrained case; compare the investigations in \cite[Section 4]{TApel_MMateos_JPfefferer_ARosch_2015} related to the convex case.
\begin{theorem}\label{thm:higher_reg}
   Suppose that Assumption~\ref{assumption_data} is satisfied  and that $y^d \in H^{t}(\Omega)$ with $0 \leq t < 1$. Then, for $s\in [\frac{1}{2}, \min\{\frac{1}{2}+t,\pi/\theta -1/2\})$, for the solution of the optimality system \eqref{p13a}--\eqref{p13c}, we have 
  \begin{equation*}
     u \in H^s(\Gamma), \;\; y \in H^{s+1/2}(\Omega), \;\;  \bq \in H^{s-1/2}(\Omega)^2\cap H(\mathrm{div},\Omega),\end{equation*}
as well as
\begin{equation*}
z\in H^{s+3/2}(\Omega)\cap H^1_0(\Omega) \;\; \text{and} \;\; \bp\in H^{s+1/2}(\Omega)^2\cap H(\mathrm{div},\Omega).
  \end{equation*}  
\end{theorem}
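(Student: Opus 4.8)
The plan is to obtain the asserted regularity by a short bootstrapping argument built on the fixed-point characterization $u=\mathcal{P}_{\Uad}\!\big(\tfrac{\epsilon}{\omega}\,\partial_n z\big)$ of the variational inequality \eqref{p13c} (cf.\ \cite[Lemma~3.3]{KKunisch_BVexler_2007c}), fed by two elliptic shift estimates on the convex polygon $\Omega$ and by the fractional stability of the pointwise projection. First I would record the base case $s=\tfrac12$, which is already available: \eqref{p5} gives $y\in H^{1/2}(\Omega)$, hence $y-y^d\in L^2(\Omega)$; elliptic regularity for the Dirichlet problem \eqref{p13b} on a convex polygon then yields $z\in H^2(\Omega)\cap H^1_0(\Omega)$ with $\partial_n z\in H^{1/2}(\Gamma)$ (see \cite[Th.~2.5]{JPfefferer_BVexler_2025}), so that $u\in H^{1/2}(\Gamma)$; and from the divergence structure of \eqref{p13a}--\eqref{p13b} together with $\nabla\cdot\beta=0$ one reads off $\bq\in L^2(\Omega)^2\cap H(div,\Omega)$ and $\bp\in H^1(\Omega)^2\cap H(div,\Omega)$.

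Next I would isolate the two shift lemmas. State shift: if $u\in H^{\sigma}(\Gamma)$ with $\tfrac12\le\sigma<\pi/\theta-\tfrac12$, then, since $u\in H^\sigma(\Gamma)$ admits a bounded lifting $Eu\in H^{\sigma+1/2}(\Omega)$ (no vertex compatibility is needed as $\sigma<\tfrac32$), writing $y=Eu+y_0$ with $y_0$ solving the homogeneous Dirichlet problem whose right-hand side is dominated by $\epsilon\,\Delta(Eu)\in H^{\sigma-3/2}(\Omega)$, the polygonal shift theorem (valid because $\sigma+\tfrac12<1+\pi/\theta$) gives $y\in H^{\sigma+1/2}(\Omega)$ with an accompanying continuity estimate; then $\epsilon^{\hF}\nabla\cdot\bq=f-\alpha y-\beta\cdot\nabla y\in L^2(\Omega)$ shows $\bq=-\epsilon^{\hF}\nabla y\in H^{\sigma-1/2}(\Omega)^2\cap H(div,\Omega)$. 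Adjoint shift: if $y-y^d\in H^{\tau}(\Omega)$ with $\tau\ge0$, then, separating the corner singular functions $r^{k\pi/\theta}(\cdots)$ whose leading exponent enforces the ceiling $1+\pi/\theta$, one obtains $z\in H^{\rho}(\Omega)\cap H^1_0(\Omega)$ for every $\rho<\min\{\tau+2,\,1+\pi/\theta\}$, hence $\partial_n z\in H^{\rho-3/2}(\Gamma)$ and $\bp=\epsilon^{\hF}\nabla z\in H^{\rho-1}(\Omega)^2\cap H(div,\Omega)$; the convection and reaction contributions are carried along at each step as lower-order data and do not shift the limiting exponents. Finally I would invoke the standard fact that the truncation operator $\mathcal{P}_{\Uad}$ is bounded on $H^\sigma(\Gamma)$ for every $\sigma\in[0,\tfrac32)$ --- precisely the range at issue here, since $s<\tfrac12+t<\tfrac32$; this is why the pointwise control bounds do not degrade the regularity.

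The iteration then closes in two passes. From $u\in H^{1/2}(\Gamma)$ the state shift gives $y\in H^1(\Omega)$, hence $y-y^d\in H^{\min\{1,t\}}(\Omega)=H^t(\Omega)$. Fix $s$ with $\tfrac12\le s<\min\{\tfrac12+t,\pi/\theta-\tfrac12\}$, so that $s-\tfrac12<\min\{t,\pi/\theta-1\}$; then $y-y^d\in H^{s-1/2}(\Omega)$ and the adjoint shift, applied with $\rho=s+\tfrac32$, produces $z\in H^{s+3/2}(\Omega)\cap H^1_0(\Omega)$, $\bp\in H^{s+1/2}(\Omega)^2\cap H(div,\Omega)$ and $\partial_n z\in H^{s}(\Gamma)$. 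Since $s<\tfrac32$, projection stability gives $u=\mathcal{P}_{\Uad}(\tfrac{\epsilon}{\omega}\,\partial_n z)\in H^{s}(\Gamma)$, and a final application of the state shift (legitimate because $s<\pi/\theta-\tfrac12$) yields $y\in H^{s+1/2}(\Omega)$ and $\bq\in H^{s-1/2}(\Omega)^2\cap H(div,\Omega)$. Collecting the five statements completes the proof, and the argument transparently explains the cap $\min\{\tfrac12+t,\pi/\theta-\tfrac12\}$: the right-hand side $y-y^d$ of the adjoint problem cannot be more regular than $H^t$, and the adjoint state inevitably inherits the corner singularity $r^{\pi/\theta}$.

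I expect the genuine work to lie in the two polygonal shift estimates with their \emph{sharp} exponent windows, and above all in the very weak state estimate $u\in H^\sigma(\Gamma)\Rightarrow y\in H^{\sigma+1/2}(\Omega)$: this must be justified through the transposition formulation \eqref{p4}, controlling simultaneously the corner expansion of $y$ on $\Omega$ and the lifting/trace operators in the fractional range $\sigma+\tfrac12\in(1,2)$. These estimates are essentially the content of \cite[Section~4]{TApel_MMateos_JPfefferer_ARosch_2015} and \cite[Theorem~2.10]{WHu_MMateos_JSingler_YZhang_2018}, which I would quote and adapt to the control-constrained setting, where the only additional ingredient is the $H^\sigma$-stability of $\mathcal{P}_{\Uad}$. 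The bookkeeping of the lower-order terms and the $H(div,\Omega)$ memberships is routine.
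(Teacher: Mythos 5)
The paper offers no proof of this theorem: it is imported verbatim from \cite[Theorem~2.10]{WHu_MMateos_JSingler_YZhang_2018} together with the remark that the control--constrained case is covered by \cite[Section~4]{TApel_MMateos_JPfefferer_ARosch_2015}. Your bootstrap (very weak state regularity $\Rightarrow$ adjoint shift on the convex polygon $\Rightarrow$ projection formula $u=\mathcal P_{\Uad}(\tfrac{\epsilon}{\omega}\partial_n z)$ with $H^\sigma$-stability of the truncation for $\sigma<\tfrac32$ $\Rightarrow$ state shift) is exactly the mechanism behind those references, and you correctly identify both the source of the cap $\pi/\theta-\tfrac12$ (the corner singularity $r^{\pi/\theta}$ of the \emph{adjoint}) and the extra ingredient needed for the constrained case. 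So the approach matches the paper's.

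Two bookkeeping slips are worth repairing. First, you state the adjoint shift as $z\in H^{\rho}$ for every $\rho<\min\{\tau+2,\,1+\pi/\theta\}$ and then apply it with $\tau=s-\tfrac12$ and $\rho=s+\tfrac32=\tau+2$, which your own hypothesis excludes; the fix is immediate, since $y-y^d\in H^{t}(\Omega)$ with $t>s-\tfrac12$ strictly, so one may take $\tau=t$ and obtain $\rho=s+\tfrac32<\min\{t+2,\,1+\pi/\theta\}$ legitimately. Second, in the base case you read off $\bq=-\epsilon^{1/2}\nabla y\in L^2(\Omega)^2$ from $y\in H^{1/2}(\Omega)$ only; this requires $y\in H^1(\Omega)$, which is available only after the first pass of the iteration has upgraded $u$ to $H^{1/2}(\Gamma)$, so the $\bq$-statement should be deferred to that point (as your second pass in fact does). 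Finally, for $\sigma\in(1,\tfrac32)$ the lifting $H^{\sigma}(\Gamma)\to H^{\sigma+1/2}(\Omega)$ on a polygon does require continuity of the datum at the vertices; this holds automatically because $H^{\sigma}(\Gamma)\hookrightarrow C(\Gamma)$ for $\sigma>\tfrac12$ on the closed curve $\Gamma$, but the justification is that embedding rather than the absence of compatibility conditions. None of these affects the validity of the argument.
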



\section{Local discontinuous Galerkin formulation}\label{sec:ldg}

Since we assume that the domain $\Omega$ is polygonal, its boundary is exactly represented by triangle edges. We denote $\{ \cT_h\}_h$ as a family of shape-regular simplicial triangulations of $\Omega$. Each mesh $\cT_h$ consists of closed triangles such that $\overline{\Omega} = \bigcup_{K \in \cT_h} K$ holds. We assume that the mesh is regular in the following sense: for different triangles
$K_i, K_j \in \cT_h$, $i \not= j$, the intersection  $K_i \cap K_j$ is either empty, a vertex, or an edge, i.e., hanging nodes are not allowed. The diameter of an element $K$ and the length of an edge $E$ are denoted by $h_{K}$ and $h_E$, respectively, with $h = \max \limits_{K \in \cT_h}  h_K$.
The  set of all edges $\cE_h$ is split into the set $\cE^0_h$ of interior edges and the set $\cE^{\partial}_h$ of boundary edges, so that $\cE_h=\cE^{0}_h  \cup \cE^{\partial}_h$. For the outward unit normal  vector  $\bn$ on $\Gamma$, the inflow and outflow parts of $\Gamma$ are denoted by $\Gamma^-$ and $\Gamma^+$, respectively,
\[
          \Gamma^- = \set{x \in \Gamma}{\beta\cdot \bn < 0}, \quad
          \Gamma^+ = \set{x \in \Gamma}{\beta \cdot \bn \geq 0}.
\]
Similarly, the boundaries of an element $K \in \cT_h$ can be categorized into inflow and outflow boundaries: $\partial K^-$ and $\partial K^+$. 

Let $E$ be a common edge for two elements $K$ and $K^e$. For a piecewise continuous scalar function $y$, there are two traces of $y$ along $E$, denoted by $y|_E$ from inside $K$ and $y^e|_E$ from inside $K^e$. Then, the jump and average of $y$ across the edge $E$ are defined by:
\begin{align*}
    \jump{y}&=y|_E\bn_{K}+y^e|_E\bn_{K^e}, \quad
    \average{y}=\frac{1}{2}\big( y|_E+y^e|_E \big),
\end{align*}
where $\bn_K$ (resp. $\bn_{K^e}$) denotes the outward unit  normal vector to $\partial K$ (resp. $\partial K^e$). Similarly, for a piecewise continuous vector field $\mathbf{q}$, the jump and average across an edge $E$ are defined, respectively, by
\begin{align*}
           \jump{\mathbf{q}}&=\mathbf{q}|_E \cdot \bn_{K} + \mathbf{q}^e|_E \cdot \bn_{K^e}, \quad
          \average{\mathbf{q}}=\frac{1}{2}\big(\mathbf{q}|_E+\mathbf{q}^e|_E \big).
\end{align*}
For a boundary edge $E \in K \cap \Gamma$, we set $\average{\mathbf{q}}=\mathbf{q}$ and $\jump{y}=y\bn$,
where $\bn$ is the outward unit normal  vector on $\Gamma$. Note that the jump in $y$ is vector-valued, whereas the jump in $\mathbf{q}$ is  scalar-valued which only involves the normal component of $\mathbf{q}$.

To adapt the continuous mixed formulation to the LDG setting, we note that the weak solution of \eqref{p13a} also satisfies  the following elementwise system on each element $K \in \cT_h$
\begin{subequations}\label{p14}
\begin{eqnarray}
-(\epsilon^\hF \bq + \beta y, \nabla v)_K + (\alpha y, v)_K + \langle ( \epsilon^\hF \bq + \beta y ) \cdot \bn, v \rangle_{\partial K} &=& (f,v)_K   \quad \;\; \forall  v \in V, \label{p14a} \\
( \bq, \mathbf{r} )_K - ( \epsilon^\hF y,  \nabla \cdot  \mathbf{ r})_K  +\langle \epsilon^\hF u,  \mathbf{r} \cdot \bn \rangle_{\partial K} &=&0 \qquad \qquad \forall \mathbf{r} \in \mathbf{W}, \label{p14b}
\end{eqnarray}
\end{subequations}
where
\begin{equation*}
\begin{split}
    \mathbf{W} :=& \, \set{\mathbf{w} \in \left( L^2(\Omega) \right)^2}{\mathbf{w}_K \in \left( H^1(K) \right)^2, \;\; \forall K \in \cT_h}, \\
    V :=& \, \set{v\in L^2(\Omega)}{v_K \in H^1(K), \;\; \forall K \in \cT_h}.
\end{split}
\end{equation*}

Next, we seek to approximate the state solution $(y, \bq)$  by functions $(y_h, \bq_h)$ in the following finite element spaces $\bW_h \times V_h \subset \bW \times V$:
\begin{subequations}\label{DG1}
\begin{align}
    \mathbf{W}_h &= \set{\mathbf w \in \big(L^2(\Omega)\big)^2}{ \mathbf w\mid_{K}\in \big( \mathbb{S}^1(K) \big)^2, \quad \forall K \in \cT_h}, \\[4pt]
    V_h &= \set{v \in L^2(\Omega)}{ v \mid_{K}\in \mathbb{S}^1(K), \quad \forall K \in \cT_h}, \\[4pt]
    U_h &= \set{u \in L^2(\Gamma)}{ u \mid_{E}\in \mathbb{S}^1(E), \quad \forall E \in \cE_h^{\partial}},
\end{align}
\end{subequations}
where $\mathbb{S}^1(K)$ (resp. $\mathbb{S}^1(E)$) denotes the local finite element space, which consists of linear polynomials on each element $K$ (resp. on $E$). Then, for all $(v, \br) \in  V_h \times \bW_h$ the approximate solution $(y_h,\bq_h)$ of the state solution $(y,\bq)$ satisfies
\begin{subequations}\label{DG2}
\begin{eqnarray}
-(\epsilon^\hF \bq_h + \beta y_h, \nabla v)_{K} + (\alpha y_h, v)_{K} + \langle ( \epsilon^\hF \widehat{\bq}_h + \beta \widetilde{y}_h ) \cdot \bn, v \rangle_{\partial K} &=& (f,v)_{K}, \\
( \bq_h, \br )_{K} - (\epsilon^\hF y_h, \nabla \cdot  \br)_{K} + \langle \epsilon^\hF \widehat{y}_h,  \br \cdot \bn \rangle_{ \partial K} &=&0.
\end{eqnarray}
\end{subequations}
The numerical fluxes, denoted by $\widehat{\bq}_h, \widetilde{y}_h$, and  $\widehat{y}_h$,  must be chosen appropriately  to ensure the stability of the method and to improve its accuracy.

We are now ready to introduce the expressions that define the numerical fluxes. The numerical traces of $y_h$ associated with the diffusion and convection terms are characterized, respectively, by
\begin{equation}\label{DG3}
\widehat{y}_h = \left\{
                  \begin{array}{ll}
                    \average{y_h} + \mathbf{C}_{12} \cdot \jump{y_h}, & E \in \cE^{0}_h, \\
                    u_h, & E \in \cE^{\partial}_h,
                  \end{array}
                \right.
\quad \hbox{and} \quad
\widetilde{y}_h =\left\{
                 \begin{array}{ll}
                   u_h, & E \in \Gamma^{-}, \\
                   \average{y_h} + \mathbf{D}_{11} \cdot \jump{y_h}, & E \in \cE^{0}_h, \\
                   y_h, & E \in \Gamma^+.
                 \end{array}
               \right.
\end{equation}
From \eqref{DG3}, it follows that the numerical trace $\widetilde{y}_h$ aligns with the traditional upwinding trace. Moreover, the numerical flux  $\widehat{\bq}_h $ is given by
\begin{eqnarray}\label{DG4}
\widehat{\bq}_h &=& \left\{
                  \begin{array}{ll}
                    \average{\bq_h} - C_{11}\jump{y_h} - \mathbf{C}_{12} \jump{\bq_h}, & E \in \cE^{0}_h, \\
                    \bq_h  - C_{11} (y_h -u_h) \bn, & E \in \cE^{\partial}_h.
                  \end{array}
                \right.
\end{eqnarray}
Here, we set $C_{11} = \epsilon/h_E$ for each $E \in \cE_h$,  choose $\mathbf{C}_{12}$ such that $\mathbf{C}_{12} \cdot \bn = \frac{1}{2} \sign \big( \bn \cdot \mathbf v \big)$ for a nonzero arbitrary \x{but fixed} vector $\mathbf{v}$, and define the vector-valued function $\mathbf{D}_{11}$ by
\[
\mathbf{D}_{11} \cdot \bn = \frac{1}{2} \sign \big( \bn \cdot \beta \big).
\]
\x{Note that the auxiliary variable $\mathbf{v}$ need not be associated with the convective velocity $\boldsymbol{\beta}$; it may be chosen independently without affecting the 
convergence properties of the method. Nevertheless, when $\boldsymbol{\beta}$ is not 
identically zero on any element $K \in \mathcal{T}_h$, it is natural and often convenient 
to relate $\mathbf{v}$ to $\boldsymbol{\beta}$.} For a more detailed discussion, we refer the reader to \cite{PCastillo_BCockburn_IPergugia_DSchotzau_2000,BCockburn_JGopalakrishnan_FJSayas_2010,BCockburn_GKanschat_DSchotzau_2004a,ZZhou_XYu_NYan_2014a}.

Inserting the numerical fluxes in \eqref{DG3}--\eqref{DG4} into \eqref{DG2} and summing over all elements, we obtain
\begin{align*}
&-\sum\limits_{K \in \cT_h} \int_{K} (\epsilon^{\hF} \bq_h + \beta y_h) \cdot \nabla v \, dx  
+ \sum\limits_{E \in \cE_h^0} \int_E \epsilon^{\hF} 
\left( \average{\bq_h} - C_{11} \jump{y_h} - \mathbf{C}_{12} \jump{\bq_h} \right) 
\cdot \jump{v} \, ds \\
&\quad + \sum\limits_{K \in \cT_h} \int_{K} \alpha y_h \, v \, dx 
+ \sum\limits_{E \in \cE_h^0} \int_E 
\big( \average{y_h} + \mathbf{D}_{11} \cdot \jump{y_h} \big) 
\beta \cdot \jump{v} \, ds 
+ \sum\limits_{E \subset \Gamma^+} \int_E (\bn \cdot \beta) y_h \, v \, ds \\
&\quad + \sum\limits_{E \in \cE_h^{\partial}} \int_E 
\epsilon^{\hF} \big( \bq_h \cdot \bn - C_{11} y_h \big) v \, ds 
= \int_{\Omega} f \, v \, dx 
- \sum\limits_{E \in \cE_h^{\partial}} \int_E \epsilon^{\hF} C_{11} u_h \, v \, ds 
+ \sum\limits_{E \subset \Gamma^-} \int_E |\beta \cdot \bn| u_h \, v \, ds,
\end{align*}

and

\begin{align*}
&\int_{\Omega} \bq_h \cdot \br \, dx  
- \sum\limits_{K \in \cT_h} \int_{K} \epsilon^{\hF} y_h \, \nabla \cdot \br \, dx  
+ \sum\limits_{E \in \cE_h^0} \int_E \epsilon^{\hF} 
\big( \average{y_h} + \mathbf{C}_{12} \cdot \jump{y_h} \big) 
\jump{\br} \, ds \\
&\quad = -\sum\limits_{E \in \cE_h^{\partial}} \int_E \epsilon^{\hF} u_h \, \br \cdot \bn \, ds.
\end{align*}
It is convenient to introduce the following bi(linear) forms:
\begin{align*}
a_h(\bq, \br) :=& \int_{\Omega} \bq \cdot \br \, dx, \\[0.3em]
b_h(y, \br) :=& -\sum\limits_{K \in \cT_h} \int_{K} \epsilon^{\hF} y \, \nabla \cdot \br \, dx  
+ \sum\limits_{E \in \cE_h^0} \int_E \epsilon^{\hF} 
\big( \average{y} + \mathbf{C}_{12} \cdot \jump{y} \big) \jump{\br} \, ds, \\[0.3em]
c_h(y, v) :=& \sum\limits_{K \in \cT_h} \int_{K} 
\big( \alpha y \, v - y \, \beta \cdot \nabla v \big) \, dx  
+ \sum\limits_{E \in \cE_h^0} \int_E 
\big( \average{y} + \mathbf{D}_{11} \cdot \jump{y} \big) 
\beta \cdot \jump{v} \, ds \\
& - \sum\limits_{E \in \cE_h^0} \int_E \epsilon^{\hF} C_{11} \jump{y} \cdot \jump{v} \, ds 
+ \sum\limits_{E \subset \Gamma^+} \int_E (\bn \cdot \beta) y \, v \, ds 
- \sum\limits_{E \in \cE_h^{\partial}} \int_E \epsilon^{\hF} C_{11} y \, v \, ds,
\end{align*}
\begin{align*}
m_{h,1}(u, \br) :=& -\sum\limits_{E \in \cE_h^{\partial}} \int_E \epsilon^{\hF} u \, \br \cdot \bn \, ds, \\[0.3em]
m_{h,2}(u, v) :=&  -\sum\limits_{E \in \cE_h^{\partial}} \int_E \epsilon^{\hF} C_{11} u \, v \, ds  
+ \sum\limits_{E \subset \Gamma^-} \int_E |\beta \cdot \bn| u \, v \, ds, \\[0.3em]
F(v) :=& \int_{\Omega} f \, v \, dx.
\end{align*}
By applying integration by parts to the first term in $b_h(\cdot,\cdot)$, we obtain
\begin{align*}
b_h(y, \br) 
=& -\sum\limits_{K \in \cT_h} \int_{K} \epsilon^{\hF} y \, \nabla \cdot \br \, dx  
   + \sum\limits_{E \in \cE_h^0} \int_E \epsilon^{\hF} 
   \big( \average{y} + \mathbf{C}_{12} \cdot \jump{y} \big) 
   \jump{\br} \, ds \\
=&  \sum\limits_{K \in \cT_h} \int_{K} \epsilon^{\hF} 
   \nabla y \cdot \br \, dx 
   - \sum\limits_{K \in \cT_h} \int_{\partial K} \epsilon^{\hF} 
   y \, \br \cdot \bn \, ds \\
&\quad + \sum\limits_{E \in \cE_h^0} \int_E \epsilon^{\hF} 
   \big( \average{y} + \mathbf{C}_{12} \cdot \jump{y} \big) 
   \jump{\br} \, ds.
\end{align*}
Then, by the following straightforward computation
\[
\sum\limits_{K \in \cT_h} \int_{\partial K} \epsilon^{\hF} y \, \br \cdot \bn \, ds 
= \sum\limits_{E \in \cE_h^0 \cup \cE_h^{\partial}} 
\int_E \average{\br} \cdot \jump{\epsilon^{\hF} y} \, ds 
+ \sum\limits_{E \in \cE_h^0} 
\int_E \jump{\br} \cdot \average{\epsilon^{\hF} y} \, ds,
\]
we get
\begin{align*}
b_h(y, \br) 
=&  \sum\limits_{K \in \cT_h} \int_{K} \epsilon^{\hF} \nabla y \cdot \br \, dx  
- \sum\limits_{E \in \cE_h^0} \int_E \epsilon^{\hF} 
\big( \average{\br} - \mathbf{C}_{12} \jump{\br} \big) 
\cdot \jump{y} \, ds \\
&\quad - \sum\limits_{E \in \cE_h^{\partial}} 
\int_E \epsilon^{\hF} y \, \br \cdot \bn \, ds.
\end{align*}

For given right-hand side $f$ and boundary condition $u$, the LDG approximation of the unique solution $(y,\bq)$ to \eqref{p13a} is given by $(y_h,\bq_h) \in V_h\times \bW_h$ satisfying
\begin{subequations}\label{LDG-approx}
\begin{align}
    a_h(\bq_h, \br) + b_h(y_h, \br) &= m_{h,1}(u, \br) 
    && \forall \br \in \bW_h, \\
   -b_h(v, \bq_h) + c_h(y_h, v) &= m_{h,2}(u, v) + F(v) 
    && \forall v \in V_h.
\end{align}
\end{subequations}
For given $u\in L^2(\Gamma)$ and $f\in L^2(\Omega)$, this system admits a unique solution, which can be established using arguments similar to those in \cite[Proposition 2.1]{PCastillo_BCockburn_IPergugia_DSchotzau_2000}.

Hence, the LDG approximation scheme of the Dirichlet boundary control problem 
\eqref{p1}--\eqref{p2} reads as 
\begin{equation}\label{DG6}
    \underset{u_h \in \Uadh, \, (y_h, \bq_h) \in V_h \times \bW_h}{\hbox{minimize}} \;
    J(y_h,u_h) = \frac{1}{2}\|y_h - y^d\|^{2}_{0,\Omega} 
    + \frac{\omega}{2} \|u_h\|^{2}_{0,\Gamma}
\end{equation}
subject to 
\begin{subequations}\label{DG5}
\begin{align}
    a_h(\bq_h, \br) + b_h(y_h, \br) &= m_{h,1}(u_h, \br) 
    && \forall \br \in \bW_h, \\
   -b_h(v, \bq_h) + c_h(y_h, v) &= m_{h,2}(u_h, v) + F(v) 
    && \forall v \in V_h.
\end{align}
\end{subequations}

\noindent Here, the set of admissible controls is chosen either as $\Uadh:= \Uad$, which is the case of variational discretization \cite{MHinze_2005a}, or as $\Uadh:=\Uad \cap U_h$, which we refer to as the case of full discretization. Since $\Uadh$ is nonempty,  closed, and convex and $J$ is uniformly convex, problem \eqref{DG6}-\eqref{DG5} admits a unique solution $((y_h,\bq_h),u_h)\in (V_h\times \bW_h)\times \Uadh$.
Moreover, it follows from standard arguments that $((y_h,\bq_h),u_h)$ is the unique solution of the optimal control problem \eqref{DG6}-\eqref{DG5} if and only if a pair $(\bp_h,z_h)\in \bW_h\times V_h$ of discrete adjoint flux and discrete adjoint potential exists, which together with $((y_h,\bq_h),u_h,(\bp_h,z_h))$ solves the following discrete optimality conditions
\begin{subequations}\label{DG8}
\begin{align}
a_h(\bq_h, \br) + b_h(y_h, \br) 
&= m_{h,1}(u_h, \br)
&& \forall \br \in \bW_h, \label{DG8a} \\[0.3em] 
-b_h(v, \bq_h) + c_h(y_h, v) 
&= m_{h,2}(u_h, v) + F(v)
&& \forall v \in V_h, \label{DG8b} \\[0.3em] 
a_h(\bp_h, \psi) - b_h(z_h, \psi) 
&= 0 
&& \forall \psi \in \bW_h, \label{DG8c} \\[0.3em] 
b_h(\phi, \bp_h) + c_h(\phi, z_h) 
&= (y_h - y^d, \phi)_\Omega
&& \forall \phi \in V_h, \label{DG8d} \\[0.3em] 
\langle \omega u_h + m'_{1,h}(u_h, \bp_h) + m'_{2,h}(u_h, z_h), \, w - u_h \rangle_\Gamma 
&\ge 0
&& \forall w \in \Uadh. \label{DG8e}
\end{align}
\end{subequations}
The variational inequality in \eqref{DG8e} in terms of $\bq_h$ and $z_h$ reads
\begin{equation}
\langle \omega u_h - \epsilon^{\hF} \bp_h \cdot \bn 
+ \kappa_z z_h, 
\, w - u_h \rangle_{\Gamma} \ge 0
\qquad \forall w \in \Uadh,
\end{equation}
where $\kappa_z =  -\epsilon^{\hF} C_{11} + \chi_{\Gamma^-} |\beta \cdot \bn|$  \x{with  $\chi_{\Gamma^-}$ denoting the indicator function of $\Gamma^-$.}

If the optimal solution $\big( y, \bq, z, \bp, u \big) 
\in V \times \bW \times V \times \bW \times \Uad$ is smooth enough, it also satisfies the system
\begin{subequations}\label{sys:optimality_cont}
\begin{align}
a_h(\bq, \br) + b_h(y, \br) 
&= m_{h,1}(u, \br)
&& \forall \br \in \bW,  \\[0.3em]
-b_h(v, \bq) + c_h(y, v) 
&= m_{h,2}(u, v) + F(v)
&& \forall v \in V, \\[0.3em]
a_h(\bp, \psi) - b_h(z, \psi) 
&= 0
&& \forall \psi \in \bW,  \\[0.3em]
b_h(\phi, \bp) + c_h(\phi, z) 
&= (y - y^d, \phi)_\Omega
&& \forall \phi \in V, \\[0.3em]
\langle \omega u + m'_{1,h}(u, \bp) + m'_{2,h}(u, z), \, w - u \rangle_\Gamma 
&\ge 0
&& \forall w \in \Uad.
\end{align}
\end{subequations}

\section{Error estimates}\label{sec:estimate}

In this section, we present error estimates for the LDG approximation of the Dirichlet boundary control problem posed on a convex polygonal domain. The analysis is carried out for two different discretization strategies applied to the optimal control problem: the variational discretization approach and the piecewise linear discretization approach.

Let us introduce the \z{(linear)} control-to-state operator
$
    S : L^2(\Gamma) \to H^{\hF}(\Omega),
$
defined by $u \mapsto S(u) := y$, where for a given $u$ the state $y$ satisfies \eqref{p4}. Using this, the reduced cost functional can be written as
\[
    \widehat J : U \to \mathbb{R}, 
    \qquad 
    \widehat J(u) = J(S u, u).
\]
Hence, the optimal control problem (\ref{p1})-(\ref{p2}) can be reformulated as
\begin{equation}\label{eqn:reduced_problem}
    \min_{u \in \Uad} \widehat J(u).
\end{equation}
Following standard arguments in \cite{JLLions_1971}, the existence and uniqueness of an optimal solution $u \in \Uad$ can be established by invoking the continuity and strict convexity of $\widehat J$. For a control $u \in U$ and a direction $\delta u \in U$, the directional derivatives are given by 
\begin{equation}\label{continuous_derivative}
 \widehat J'(u)(\delta u) = (y - y^d, y(\delta u))_\Omega + \omega \langle u, \delta u \rangle_{\Gamma} = \langle\, \omega u - \epsilon^{\hF} \mathbf{p}\cdot \bn,\, \delta u\, \rangle_{\Gamma},
\end{equation}
where $y$ is the state associated with the control $u$, $y(\delta u)$ denotes the state assigned to the control $\delta u$, and $\bp$ is the adjoint  flux associated with the observation $y-y^d$. The necessary  optimality condition of the reduced problem \eqref{eqn:reduced_problem} is given as a variational inequality:
\begin{equation}\label{reduced_cont_variational}
   \widehat J'(u)(\delta u - u) \geq 0 \quad \forall \delta u \in \Uad.
\end{equation}

To discretize the reduced optimal control problem~\eqref{eqn:reduced_problem}, 
we define the discrete reduced cost functional as
\[
    \widehat{J}_h : U \to \mathbb{R}, 
    \qquad 
    \widehat{J}_h(u) = J(S_h u,u),
\]
where $S_h : u \mapsto y_h(u)$ represents the discrete solution operator, which, according to \cite[Proposition 2.1]{PCastillo_BCockburn_IPergugia_DSchotzau_2000}, is well-defined. 
The corresponding discretized reduced problem is then given by
\begin{equation}\label{eqn:discrete_reduced_problem}
    \min_{u \in \Uadh} \; \widehat{J}_h(u).
\end{equation}
Analogous to the continuous case, the discrete reduced cost functional 
$\widehat{J}_h$ is quadratic. Its directional  derivative is given by
\begin{equation}\label{discrete_derivative}
    \widehat{J}_h'(u)(\delta u) 
    = (y_h(u) - y^d,\, y_h(\delta u))_{\Omega}
      + \omega \langle u,\, \delta u \rangle_{\Gamma} = \langle \omega u - \epsilon^{\hF} \bp_h \cdot \bn 
+ \kappa_z z_h, 
\, \delta u \rangle_{\Gamma},
\end{equation}
where $y_h(u)$, $y_h(\delta u)$ \x{denote} discrete states associated with $u$ and $\delta u$, respectively, and $\bp_h$ and $z_h$ denote the adjoint flux and potential associated with $y_h(u)-y^d$.

In this work, we consider two approaches for the discretization of the control variable: 
the \emph{variational discretization} proposed in~\cite{MHinze_2005a}, 
and the \emph{piecewise linear discretization}. 
For the linear discretization, we employ the discrete space defined in~\eqref{DG1}, 
while in the case of variational discretization, the discrete control space is chosen as 
$U_h = U = L^2(\Gamma)$. 
In both settings, the discrete admissible control set is given by 
$\Uadh = U_h \cap \Uad$. 
Furthermore, for the variational discretization approach, the discrete optimal control $u_h$ is characterized by
\begin{equation}\label{reduced_disc_var_variational}
    \widehat{J}_h'(u_h)(\delta u - u_h) \ge 0,
    \qquad \forall \delta u \in \Uad,
\end{equation}
whereas for the piecewise linear discretization approach, the discrete optimal control $u_h$ satisfies
\begin{equation}\label{reduced_disc_variational}
    \widehat{J}_h'(u_h)(\delta u_h - u_h) \ge 0,
    \qquad \forall \delta u_h \in \Uadh.
\end{equation}
In the analysis, we require an interpolation/projection operator that preserves admissibility; however, the $L^2$-projection does not, in general, preserve the admissibility of functions in $\Uad$. Hence, we employ the quasi-interpolation operator $\pi_h : L^1(\partial \Omega) \to \mathcal{E}_h^\partial$ inspired by \cite{CCarstensen_1999}, and used in the context of Dirichlet boundary control, see, e.g.,  \cite{JPfefferer_BVexler_2025}. The operator $\pi_h$ is defined as
\begin{equation}\label{def:quasi_interpolation}
    \pi_h u = \sum_{n \in \mathcal{N}_h^\partial} \frac{(u, \xi_n)}{(1, \xi_n)} \, \xi_n,
\end{equation}
where $\mathcal{N}_h^\partial$ denotes the set of all mesh nodes of $\mathcal{T}_h$ lying on the boundary $\Gamma$, and $\xi_n \in U_h$ represents the local (nodal) basis function associated with node $n$ on the boundary. By construction, it follows that $\pi_h u \in \Uadh$ for every $u \in \Uad$. For $u \in H^{s}(\Gamma)$  with $0 \leq s \leq 1$, the following estimates  also hold \cite[Lemma~5.5]{JPfefferer_BVexler_2025}
\begin{subequations} \label{ineq:quasi_interpolation}
\begin{align}
  \| u - \pi_h u \|_{0,\Gamma} &\leq C h^{s} \| u \|_{s,\Gamma}, \label{ineq:quasi_interpolationa} \\[4pt]
  \| u - \pi_h u \|_{-s,\Gamma} &\leq C h^{2s} \| u \|_{s,\Gamma}. \label{ineq:quasi_interpolationb}
\end{align}
\end{subequations}

In the numerical error analysis, it will be convenient to use the adjoint-based representations of the respective derivatives $\widehat J'$ and $\widehat J_h'$. \x{In particular, we write
\begin{equation}\label{adj-deriv-z}
\widehat J'(u)(\delta u)  = \langle\, \omega u - \epsilon^{\hF} \mathbf{p}\cdot \bn + \kappa_z z,\, \delta u\, \rangle_{\Gamma}.
\end{equation}
This representation is valid since $z=0$ on $\Gamma$.}

To facilitate the analysis, we also introduce the following auxiliary problem: find $(z_h(y), \bp_h(y)) \in V_h \times \bW_h$ such that
\begin{subequations}\label{aux2}
\begin{align}
a_h(\bp_h(y), \psi) - b_h(z_h(y), \psi) &= 0,
    && \forall \psi \in \bW_h, \\[4pt]
b_h(\phi, \bp_h(y)) + c_h(\phi, z_h(y)) &= (y - y^d, \phi)_\Omega,
    && \forall \phi \in V_h.
\end{align}
\end{subequations}
Here, $(z_h(y), \bp_h(y))$ denotes the LDG approximation of $(z, \bp)$. 

We begin by establishing an error estimate for the variational discretization.

\begin{theorem}\label{thm:nodisc_adjoint_representation}
Let $u$ be the solution of~\eqref{eqn:reduced_problem}, and let 
$u_h$ be the corresponding discrete control obtained with the choice $U_h = U = L^2(\Gamma)$. 
Moreover, let $y$ and $y_h$ denote the corresponding continuous and discrete state solutions, respectively. 
Then, 
\begin{equation}\label{vge}
    \omega \| u - u_h \|_{0,\Gamma}^2  + \|y-y_h\|_{0,\Omega}^2 \le \|y-y_h(u)\|_{0,\Omega}^2 + C(\omega) 
     \big( \epsilon \|(\bp_h(u) - \bp)\cdot\bn\|_{0,\Gamma}^2 
         + \kappa_z^2\|z_h(u) - z\|_{0,\Gamma}^2 \big),
\end{equation}
where the positive constant $C$ is independent of the mesh size~$h$, and where $\bp_h(u):=\bp_h(y(u),\bq(u))$ and $z_h(u):=z_h(y(u),\bq(u))$ are local discontinuous Galerkin approximations of $\bp$ and $z$ associated with the continuous data $u, y(u)$, and $\bq(u)$, respectively, i.e., $\big(z_h(u),\bp_h(u)\big)$ solves \eqref{DG8c}-\eqref{DG8d} with right-hand side $y-y^d$.

\end{theorem}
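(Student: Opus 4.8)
The plan is to play off the continuous optimality condition \eqref{reduced_cont_variational} against the discrete one \eqref{reduced_disc_var_variational}, and then to turn the mismatch of the two reduced gradients into quantities living on $\Gamma$ via the adjoint representations \eqref{adj-deriv-z} and \eqref{discrete_derivative}. Since $U_h=U$, the admissible set in \eqref{reduced_disc_var_variational} is again $\Uad$, so choosing $\delta u=u_h$ in \eqref{reduced_cont_variational} and $\delta u=u$ in \eqref{reduced_disc_var_variational} yields $\widehat J'(u)(u_h-u)\ge 0\ge\widehat J_h'(u_h)(u_h-u)$, hence $\widehat J_h'(u_h)(u_h-u)\le\widehat J'(u)(u_h-u)$. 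Inserting $\widehat J'(u)(u_h-u)=\langle\,\omega u-\epsilon^{\hF}\bp\cdot\bn+\kappa_z z,\,u_h-u\,\rangle_\Gamma$ from \eqref{adj-deriv-z} (with the continuous optimal adjoint state $z$, which vanishes on $\Gamma$, and flux $\bp$) and $\widehat J_h'(u_h)(u_h-u)=\langle\,\omega u_h-\epsilon^{\hF}\bp_h\cdot\bn+\kappa_z z_h,\,u_h-u\,\rangle_\Gamma$ from \eqref{discrete_derivative} (with the discrete optimal adjoint pair $(\bp_h,z_h)$ of \eqref{DG8c}--\eqref{DG8d}, whose source is $y_h-y^d$), and cancelling the $\omega u$ contributions, I would arrive at
\[
\omega\|u-u_h\|_{0,\Gamma}^2\le\big\langle\,\epsilon^{\hF}(\bp_h-\bp)\cdot\bn-\kappa_z(z_h-z),\,u_h-u\,\big\rangle_\Gamma .
\]

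Next I would split $\bp_h-\bp=(\bp_h-\bp_h(u))+(\bp_h(u)-\bp)$ and $z_h-z=(z_h-z_h(u))+(z_h(u)-z)$, which isolates the Galerkin adjoint errors $\bp_h(u)-\bp$ and $z_h(u)-z$ occurring in \eqref{vge} from a purely discrete remainder. Since $(\bp_h,z_h)$ has source $y_h-y^d$ while $(\bp_h(u),z_h(u))$, the LDG approximation of $(\bp,z)$ introduced in the theorem, has source $y-y^d$, the pair $(\bp_h-\bp_h(u),\,z_h-z_h(u))$ solves the discrete adjoint system \eqref{DG8c}--\eqref{DG8d} with source $(y_h-y,\phi)_\Omega$. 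By the same adjoint calculus that produces \eqref{discrete_derivative} --- testing the LDG state equations \eqref{DG8a}--\eqref{DG8b} for the control $u_h-u$ with zero right-hand side against this adjoint pair and conversely, using the symmetry of $a_h$ --- one obtains $\big\langle-\epsilon^{\hF}(\bp_h-\bp_h(u))\cdot\bn+\kappa_z(z_h-z_h(u)),\,u_h-u\big\rangle_\Gamma=(y_h-y,\,y_h-y_h(u))_\Omega$, where I used that the LDG state with control $u_h-u$ and zero right-hand side equals $y_h-y_h(u)$ (both $y_h$ and $y_h(u)$ carry the same source $f$). Changing signs,
\[
\big\langle\,\epsilon^{\hF}(\bp_h-\bp_h(u))\cdot\bn-\kappa_z(z_h-z_h(u)),\,u_h-u\,\big\rangle_\Gamma=(y-y_h,\,y_h-y_h(u))_\Omega .
\]
The decisive observation is that $y_h-y_h(u)=(y_h-y)+(y-y_h(u))$, so the right-hand side equals $-\|y-y_h\|_{0,\Omega}^2+(y-y_h,\,y-y_h(u))_\Omega$; the genuine state error $\|y-y_h\|_{0,\Omega}^2$ is thus produced with the correct sign and can be moved to the left, leaving only the cross term $(y-y_h,\,y-y_h(u))_\Omega$ to be absorbed --- which is exactly what preserves the unit coefficient of $\|y-y_h(u)\|_{0,\Omega}^2$ in \eqref{vge}.

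Collecting these identities, I would reach
\[
\omega\|u-u_h\|_{0,\Gamma}^2+\|y-y_h\|_{0,\Omega}^2\le(y-y_h,\,y-y_h(u))_\Omega+\big\langle\,\epsilon^{\hF}(\bp_h(u)-\bp)\cdot\bn-\kappa_z(z_h(u)-z),\,u_h-u\,\big\rangle_\Gamma ,
\]
and it then remains to close by Cauchy--Schwarz and Young's inequality: $(y-y_h,\,y-y_h(u))_\Omega\le\tfrac12\|y-y_h\|_{0,\Omega}^2+\tfrac12\|y-y_h(u)\|_{0,\Omega}^2$, and the boundary pairing is bounded by $\tfrac\omega2\|u-u_h\|_{0,\Gamma}^2+\tfrac1{2\omega}\|\epsilon^{\hF}(\bp_h(u)-\bp)\cdot\bn-\kappa_z(z_h(u)-z)\|_{0,\Gamma}^2$. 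Moving the $\tfrac12\|y-y_h\|_{0,\Omega}^2$ and $\tfrac\omega2\|u-u_h\|_{0,\Gamma}^2$ terms to the left, multiplying by $2$, and applying $\|a-b\|^2\le 2\|a\|^2+2\|b\|^2$ to the remaining boundary term gives \eqref{vge} with $C(\omega)=2/\omega$, which is $h$-independent (the mesh dependence of $\kappa_z$ sits entirely inside the right-hand side of \eqref{vge}).

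The step I expect to be the main obstacle is the middle paragraph: recognizing that, once the Galerkin adjoint errors have been peeled off, the discrete part of the gradient mismatch collapses to the single $L^2(\Omega)$ inner product $(y-y_h,\,y_h-y_h(u))_\Omega$ through the LDG adjoint identity, and then exploiting the decomposition of $y_h-y_h(u)$ so that $\|y-y_h\|_{0,\Omega}^2$ appears directly with the right sign instead of being estimated by the triangle inequality --- the latter would put a constant larger than one in front of $\|y-y_h(u)\|_{0,\Omega}^2$. One must also keep the two Young constants in balance so that this coefficient stays exactly one, which is what dictates the grouping used above.
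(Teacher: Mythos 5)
Your proposal is correct and follows essentially the same route as the paper's proof: combining the two variational inequalities, splitting the adjoint mismatch into the discrete remainder $(\bp_h-\bp_h(u),z_h-z_h(u))$ plus the Galerkin errors $(\bp_h(u)-\bp,z_h(u)-z)$, collapsing the discrete remainder to $(y-y_h,\,y_h-y_h(u))_\Omega$ via the discrete optimality system and the symmetry of $a_h$, and then balancing Young's inequality so that $\|y-y_h(u)\|_{0,\Omega}^2$ retains coefficient one. The only difference is presentational: the paper writes out the four bilinear-form identities explicitly where you invoke them as ``the same adjoint calculus,'' but the substance coincides.
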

\begin{proof}
By taking $\delta u = u_h$ in \eqref{reduced_cont_variational} and $\delta u = u$ in \eqref{reduced_disc_var_variational}, we obtain
\[
\langle \widehat{J}'(u_h) - \widehat{J}'(u),\, u - u_h \rangle \ge 0,
\]
that is, since $z_{|_\Gamma} = 0$,
\[
\langle \omega (u_h - u) - \epsilon^{\hF} (\bp_h - \bp) \cdot \bn + \kappa_z (z_h - z),\, u - u_h \rangle_{\Gamma} \ge 0.
\]
Recalling the definition of $m_{h,1}$ and $m_{h,2}$, this implies that
\begin{align}\label{eqn:v1}
\omega \|u - u_h\|^2_{0,\Gamma}
&\le m_{h,1}(u - u_h,\, \bp_h - \bp) + m_{h,2}(u - u_h,\, z_h - z) \nonumber \\[4pt]
&= m_{h,1}(u - u_h,\, \bp_h - \bp_h(u)) + m_{h,2}(u - u_h,\, z_h - z_h(u)) \nonumber\\
&\quad + m_{h,1}(u - u_h,\, \bp_h(u) - \bp) + m_{h,2}(u - u_h,\, z_h(u) - z) \nonumber \\[4pt]
&=: I_1 + I_2.
\end{align}

To derive a bound for $I_1$ in \eqref{eqn:v1}, we make use of the optimality conditions  \eqref{DG8a}-\eqref{DG8d} together with Young's inequality
\begin{align}\label{eqn:v2}
I_1 
&= m_{h,1}(u - u_h,\, \bp_h - \bp_h(u)) 
   + m_{h,2}(u - u_h,\, z_h - z_h(u)) \nonumber \\[4pt]
&= a_h(\bq_h(u) - \bq_h,\, \bp_h - \bp_h(u)) 
   + b_h(y_h(u) - y_h,\, \bp_h - \bp_h(u)) \nonumber \\
&\quad - b_h(z_h - z_h(u),\, \bq_h(u) - \bq_h) 
   + c_h(y_h(u) - y_h,\, z_h - z_h(u)) \nonumber \\[4pt]
&= (y_h - y,\, y_h(u)- y_h)_{\Omega}  = (y_h - y,\, y_h(u) - y + y - y_h)_{\Omega} \nonumber \\[4pt]
&\le - \frac{1}{2}\|y - y_h\|^2_{0,\Omega} 
   + \frac{1}{2}\|y_h(u) - y\|^2_{0,\Omega},
\end{align}
where $y_h(u)$ denotes the LDG approximation of the state variable corresponding to the Dirichlet boundary condition $u$.

\x{Using Young's inequality, for $I_2$ we get}
\begin{align}\label{eqn:v3}
I_2 
&= m_{h,1}(u - u_h,\, \bp_h(u) - \bp) 
  + m_{h,2}(u - u_h,\, z_h(u) - z) \nonumber \\[4pt]
&\le \frac{\omega}{2} \|u - u_h\|_{0,\Gamma}^2 
   + C(\omega) 
     \big( \epsilon \|(\bp_h(u) - \bp)\cdot\bn \|_{0,\Gamma}^2 
         + \kappa_z^2\|z_h(u) - z\|_{0,\Gamma}^2 \big). 
\end{align}
Combining \eqref{eqn:v2} and \eqref{eqn:v3} in \eqref{eqn:v1}, we obtain the estimate in \eqref{vge}.
\end{proof}

For the fully discrete approach we have

\begin{theorem}\label{thm:fully-discrete}
Let $u$ be the solution of~\eqref{eqn:reduced_problem}, and let
$u_h$ be the corresponding discrete control obtained with the choice of $U_h$ defined in~\eqref{DG1}.
Further, let $y$ and $y_h$ denote the corresponding \x{continuous and discrete states}, respectively. 
Then, 
\begin{multline}\label{gge}
    \omega \| u - u_h \|_{0,\Gamma}^2  + \|y-y_h\|_{0,\Omega}^2 \le \|y-y_h(u)\|_{0,\Omega}^2 + C(\omega) 
     \big( \epsilon\|(\bp_h(u) - \bp)\cdot \bn\|_{0,\Gamma}^2 
         + \kappa_z^2\|z_h(u) - z\|_{0,\Gamma}^2 \big)  \\ + C(\omega) \|\pi_hu-u\|_{0,\Gamma}^2 + \big(\epsilon^{1/2}\|(\bp_h - \bp(y_h))\cdot \bn\|_{0,\Gamma}+ \kappa_z\|z_h - z(y_h)\|_{0,\Gamma}\big)\|\pi_hu-u\|_{0,\Gamma}  \\ + C \big(\|u\|_{s,\Gamma}, \epsilon^{1/2}\|\bp(u)\cdot\bn\|_{s,\Gamma}\big)\,
       \|\pi_h u - u\|_{-s,\Gamma},
\end{multline}
where the triplet $\big(y_h(u), z_h(u),\bp_h(u)\big)$ denotes the local discontinuous Galerkin approximations of $(y,z,\bp)$ for a given $u\in H^s(\Gamma)$  with $0\le s\le 1$, i.e., $\big(y_h(u), z_h(u),\bp_h(u)\big)$ solves \eqref{DG8a}-\eqref{DG8d} with the data $u$ and $y-y^d$, and $(\bp(y_h),z(y_h))$ denotes the continuous adjoint flux and potential associated with the optimal discrete state $y_h$, i.e., the solution to \eqref{p13b} with right-hand side $y_h-y^d$.
\end{theorem}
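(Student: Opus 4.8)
The plan is to follow the proof of Theorem~\ref{thm:nodisc_adjoint_representation}, the only structural novelty being that $u_h$ is now characterized by \eqref{reduced_disc_variational}, whose test directions are restricted to $\Uadh=U_h\cap\Uad$; the admissible substitute for $u$ as a test function is the quasi-interpolant $\pi_hu\in\Uadh$. Taking $\delta u=u_h$ in \eqref{reduced_cont_variational} (legitimate since $u_h\in\Uadh\subset\Uad$) and $\delta u_h=\pi_hu$ in \eqref{reduced_disc_variational}, adding, inserting the adjoint representations \eqref{continuous_derivative} and \eqref{discrete_derivative}, and using $z|_\Gamma=0$, one arrives after rearrangement at
\[
\omega\|u-u_h\|_{0,\Gamma}^2\;\le\;\underbrace{m_{h,1}(u-u_h,\bp_h-\bp)+m_{h,2}(u-u_h,z_h-z)}_{=:A}\;+\;\underbrace{\widehat{J}_h'(u_h)(\pi_hu-u)}_{=:B}.
\]
The term $A$ is handled verbatim as in Theorem~\ref{thm:nodisc_adjoint_representation}: writing $A=I_1+I_2$ with $I_1$ built from the LDG data $\bq_h(u),y_h(u),\bp_h(u),z_h(u)$ and $I_2$ the remaining mismatch, the discrete optimality systems \eqref{DG8a}--\eqref{DG8d} and the symmetry of $a_h$ collapse $I_1$ to $(y_h-y,\,y_h(u)-y_h)_\Omega\le\tfrac12\|y_h(u)-y\|_{0,\Omega}^2-\tfrac12\|y-y_h\|_{0,\Omega}^2$, while Cauchy--Schwarz and Young's inequality on $I_2$ yield $C(\omega)\big(\epsilon\|(\bp_h(u)-\bp)\cdot\bn\|_{0,\Gamma}^2+\kappa_z^2\|z_h(u)-z\|_{0,\Gamma}^2\big)$ plus an absorbable fraction of $\|u-u_h\|_{0,\Gamma}^2$.

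The genuinely new term is $B=\langle\,\omega u_h-\epsilon^{\hF}\bp_h\cdot\bn+\kappa_z z_h,\,\pi_hu-u\,\rangle_\Gamma$. I split its first argument as the sum of $\omega u-\epsilon^{\hF}\bp\cdot\bn$, of $\omega(u_h-u)-\epsilon^{\hF}(\bp(y_h)-\bp)\cdot\bn$, and of $\epsilon^{\hF}(\bp(y_h)-\bp_h)\cdot\bn+\kappa_z z_h$, where $(z(y_h),\bp(y_h))$ denotes the \emph{continuous} adjoint solving \eqref{p13b} with right-hand side $y_h-y^d$; this gives $B=B_1+B_2+B_3$. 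For $B_1=\langle\omega u-\epsilon^{\hF}\bp\cdot\bn,\pi_hu-u\rangle_\Gamma=\widehat{J}'(u)(\pi_hu-u)$ I use the duality between $H^s(\Gamma)$ and $H^{-s}(\Gamma)$ with $u,\bp\cdot\bn\in H^s(\Gamma)$ from Theorem~\ref{thm:higher_reg} together with the negative-norm estimate \eqref{ineq:quasi_interpolationb}, obtaining $C(\|u\|_{s,\Gamma},\|\bp(u)\cdot\bn\|_{s,\Gamma})\|\pi_hu-u\|_{-s,\Gamma}$. For $B_3$ I use $z(y_h)|_\Gamma=0$ to write $\kappa_z z_h=\kappa_z(z_h-z(y_h))$ and Cauchy--Schwarz to get $\big(\epsilon^{1/2}\|(\bp_h-\bp(y_h))\cdot\bn\|_{0,\Gamma}+\kappa_z\|z_h-z(y_h)\|_{0,\Gamma}\big)\|\pi_hu-u\|_{0,\Gamma}$. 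In $B_2$, the control part is bounded by $\tfrac{\omega}{8}\|u-u_h\|_{0,\Gamma}^2+C(\omega)\|\pi_hu-u\|_{0,\Gamma}^2$, while for the flux part I observe that, by linearity of the adjoint equation, $\bp(y_h)-\bp$ is the flux of \eqref{p13b} with right-hand side $y_h-y\in L^2(\Omega)$ and homogeneous boundary datum, so $H^2$-regularity on the convex polygon gives $\|(\bp(y_h)-\bp)\cdot\bn\|_{0,\Gamma}\le C\|y_h-y\|_{0,\Omega}$; a Young step against the negative $\|y-y_h\|_{0,\Omega}^2$ coming from $I_1$ turns this into an absorbable $\|y-y_h\|_{0,\Omega}^2$ plus a further $\|\pi_hu-u\|_{0,\Gamma}^2$ term. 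Collecting $I_1,I_2,B_1,B_2,B_3$, choosing the Young parameters small enough to absorb the $\|u-u_h\|_{0,\Gamma}^2$ and $\|y-y_h\|_{0,\Omega}^2$ contributions into the left-hand side, and relabelling constants yields \eqref{gge}; the only property of the discrete optimum used is the uniform bound $\|y_h\|_{0,\Omega}\le M$ (which follows from $J(u_h)\le J(0)$ and the boundedness of the continuous solution operator) and which keeps all constants, e.g.\ in $\|z(y_h)\|_{2,\Omega}\le C\|y_h-y^d\|_{0,\Omega}$, independent of $h$.

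I expect the flux part of $B_2$ to be the main obstacle: the quantity $(\bp(y_h)-\bp)\cdot\bn$ couples the continuous adjoint of the \emph{discrete} optimal state to the optimal adjoint, and it must be controlled without invoking any stability or continuity property of the discrete LDG solution operator. The resolution is the observation above — that $\bp(y_h)-\bp$ solves a continuous adjoint problem with $L^2$-datum $y_h-y$ — so that elliptic regularity on the convex domain bounds its normal trace on $\Gamma$ by $\|y_h-y\|_{0,\Omega}$, a quantity already present on the left-hand side, after which Young's inequality closes the estimate. A secondary bookkeeping point is that $B_1$ must be measured in the weak norm $\|\cdot\|_{-s,\Gamma}$ rather than in $L^2(\Gamma)$, so as not to lose the gain provided by \eqref{ineq:quasi_interpolationb}; this is exactly where the regularity $u,\bp\cdot\bn\in H^s(\Gamma)$ from Theorem~\ref{thm:higher_reg} enters.
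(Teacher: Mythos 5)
Your proposal is correct and follows essentially the same route as the paper: the same addition of the two variational inequalities with $\pi_h u$ as the admissible discrete test direction, the same treatment of the first group of terms as in Theorem~\ref{thm:nodisc_adjoint_representation}, and the same decomposition of $\widehat J_h'(u_h)(\pi_h u-u)$ by inserting the auxiliary continuous adjoints $(\bp(y_h),z(y_h))$ and the optimal pair $(\bp,z)$, with the critical steps being the trace/regularity bound $\|(\bp(y_h)-\bp)\cdot\bn\|_{0,\Gamma}\le C\|y_h-y\|_{0,\Omega}$ absorbed by Young's inequality and the $H^{s}$--$H^{-s}$ duality for the term $\widehat J'(u)(\pi_hu-u)$. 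Your three-way split of $B$ merely merges two of the paper's four addends (those whose adjoint potentials vanish on $\Gamma$) and is otherwise identical.
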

\begin{proof}
From the definitions of the variational inequalities and the quasi-interpolation property in \eqref{def:quasi_interpolation}, we have 
\begin{align*}
 \widehat J'(u)(u_h - u) &\ge 0 \qquad \forall\, u_h \in \Uadh = U_h \cap \Uad \subset \Uad, \\
 \widehat J_h'(u_h)(\pi_h u - u_h) &\ge 0 \qquad \forall\, \pi_h u \in \Uadh.
\end{align*}
Adding these inequalities gives
\begin{equation}\label{eqn:f1}
   \underbrace{\big(\widehat J_h'(u_h) - \widehat J'(u)\big)(u - u_h)}_{(1)} + \underbrace{\widehat J_h'(u_h)(\pi_h u - u)}_{(2)} \ge 0.
\end{equation}
We now can estimate (1) in \eqref{eqn:f1} as in the proof of Theorem \ref{thm:nodisc_adjoint_representation}, which gives contributions to the upper part of \eqref{gge}. 
To treat (2) in \eqref{eqn:f1} we write
\begin{align}\label{eqn:f5}
J_h'(u_h)(\pi_h u - u)
  &= \langle\, \omega u_h - \epsilon^{\hF} \, \bp_h \cdot \bn + \kappa_z z_h ,\, \pi_h u - u \,\rangle \nonumber \\[0.3em]
  &= \langle\, \omega (u_h-u),\, \pi_h u - u \,\rangle \nonumber\\
  &\quad
     + \langle\, \epsilon^{\hF} \, (\bp(y_h) -\bp_h)\cdot \bn 
       + \kappa_z (z_h - z(y_h)),\, \pi_h u - u \,\rangle  \nonumber\\
  &\quad
     + \langle\, \epsilon^{\hF} \, (\bp(u)-\bp(y_h))\cdot \bn
       + \kappa_z (z(y_h) - z(u)),\, \pi_h u - u \,\rangle  \nonumber\\
  &\quad
     + \langle\, \omega u -  \epsilon^{\hF} \,\bp(u)\cdot \bn + \kappa_z z(u),\, \pi_h u - u \,\rangle.
\end{align}
Here, $\bp(u),z(u)$ denote the optimal adjoint flux and adjoint potential, respectively, while $\bp(y_h),z(y_h)$ denote auxiliary continuous adjoint flux and potential associated with the optimal discrete adjoint state $y_h$, i.e., with right-hand side $y_h - y^d \in L^2(\Omega)$ in \eqref{p13b}. We already  note at this stage of the proof that the continuous adjoint variable $z$ is zero on the boundary. Straightforward estimation gives
\begin{multline}\label{extra-term}
J_h'(u_h)(\pi_h u - u) \le \frac{\omega}{4}\|u-u_h\|_{0,\Gamma}^2+C(\omega) \|\pi_hu-u\|_{0,\Gamma}^2  \\ +C \big(\epsilon^{1/2}\|(\bp_h - \bp(y_h))\cdot\bn\|_{0,\Gamma}+\kappa_z\|z_h - z(y_h)\|_{0,\Gamma}\big)\|\pi_hu-u\|_{0,\Gamma}  \\ + C\epsilon^{1/2}\|(\bp(y_h)-\bp(u))\cdot\bn\|_{0,\Gamma}\|\pi_hu-u\|_{0,\Gamma}  \\ + C\|(u,\epsilon^{1/2}\bp(u)\cdot\bn)\|_{s,\Gamma}\|\pi_hu-u\|_{-s,\Gamma}.
\end{multline}
Since $y_h, y^d\in L^2(\Omega)$, we have 
$z(y_h) \in H^2(\Omega) \cap H_0^1(\Omega)$. The trace theorem together with the continuity of the continuous adjoint solution operator gives 
\begin{equation}\label{eqn:f7}
\|(\bp(y_h) - \bp(u))\cdot\bn\|_{0,\Gamma}\leq  C \|z(y_h) - z(u)\|_{2,\Omega}  \leq C \|y_h - y \|_{0,\Omega},
\end{equation}
so that
\[C \epsilon^{1/2}\|(\bp(y_h) - \bp(u))\cdot\bn\|_{0,\Gamma}\|\pi_hu-u\|_{0,\Gamma} \le \frac{1}{4}\|y-y_h\|_{0,\Omega}^2 + C\|\pi_hu-u\|_{0,\Gamma}^2.\]
Sorting the addends of \eqref{extra-term} in accordingly gives the estimate \eqref{gge}.
\end{proof}

\subsection{Discussion of Theorem \ref{thm:nodisc_adjoint_representation} and Theorem \ref{thm:fully-discrete}:}\label{discussion} In both cases, namely the variational discrete and the fully discrete setting, to obtain error estimates for the optimal control and the associated state we need LDG error estimates for the state, the adjoint flux as well as for the adjoint potential under natural regularity requirements on the continuous optimal control $u$ and the data $y^d$ (and $f$).

Let us first recall what is known in the literature with respect to the approximation properties of LDG approximations. In the first instance we here use results of \cite{PCastillo_BCockburn_IPergugia_DSchotzau_2000}. In particular, from Theorem~2.1 and Tables~2.1 and~4.1 therein, one obtains for the case $k=1$:
\begin{lemma}\label{smooth-solutions}
Let $\big(v,\bw \big) \in V \times \bW$ be the solutions of \eqref{p14} and let $\big(v_h,\bw_h \big) \in V_h \times \bW_h$ be its LDG approximation, i.e., the solution of the discretized problem  (\ref{DG5}). Assume that $v \in H^{s+2}(\Omega)$ for some $s\ge 0$. Then
\begin{eqnarray}\label{lemmaState1}
  |v-v_h|_{1,\Omega} + \|\bw-\bw_h\|_{0,\Omega} \leq C h \|v\|_{2,\Omega}.
\end{eqnarray}
Moreover, the following $L^2$-error bound holds
\begin{eqnarray}\label{lemmaState2}
  \|v-v_h\|_{0,\Omega}  \leq C h^2\|v\|_{2,\Omega}.
\end{eqnarray}
\end{lemma}
If one inspects the proof of this lemma, one recognizes that higher regularity of $(v,\bw)\in H^{s+2}(\Omega)\times H^{s+2}(\Omega)^2$ for $s\in (0,1)$ in our case $k=1$ does not affect the error estimate.
If we look at the variables of our optimization problem we observe that for $y^d\in L^2(\Omega)$ (and $u\in H^{1/2}(\Gamma)$, which is the minimal regularity we obtain for the control $u$), the adjoint potential satisfies $z\in H^2(\Omega)\cap H^1_0(\Omega)$ with flux $\bp\in H^1(\Omega,\mathbb{R}^2)$. In this case, we conclude that
\[
\kappa_z\|z(u)-z_h(u)\|_{0,\Gamma} \lesssim h^{1/2}, \quad \|\bp(u)\cdot \bn-\bp_h(u) \cdot \bn\|_{0,\Gamma} \lesssim h^{1/2},
\]
since in our LDG method $C_{11} \sim \epsilon h^{-1}$. Moreover, with Lemma~\ref{smooth-solutions} we for higher regularity of the involved variables according to Theorem \ref{thm:higher_reg} may not expect an improvement for the case $k=1$.
But we only have $y\in H^1(\Omega)$ with flux $\bq \in H(\mathrm{div},\Omega)$. With this, Lemma~\ref{smooth-solutions} is not applicable. However, in Lemma 4.3 of the recent paper \cite{JJiang_NJWalkington_YYue_2025} treating stability and convergence of the HDG method in approximating solutions to elliptic PDEs, it among other things is shown that for $s\in [0,1]$
\begin{equation}\label{L2est}
\|y-y_h(u)\|_{0,\Omega} \le Ch^{s+1} |y|_{H^{s+1}(\Omega)}
\end{equation}
holds. \z{Assuming that this estimate can also be established within the LDG framework, one may then expect a corresponding error bound}
\[\| u - u_h \|_{0,\Gamma}  + \|y-y_h\|_{0,\Omega} \lesssim h + h^{1/2} + h^{1/2} \lesssim h^{1/2}.\]
This compares very well with the results of \cite{WGong_WHu_MMateos_JSingler_XZhang_YZhang_2018} for the case $k=0$, that is, piecewise constant flux and piecewise linear potential approximation.

In the fully discrete case, additional terms arise that need to be estimated. For the error associated with the quasi-interpolation operator $\pi_h$, one has for $u\in H^s(\Gamma)$ and  $0 \le s \le 1$ from \eqref{ineq:quasi_interpolationa}-\eqref{ineq:quasi_interpolationb}
\[\|u-\pi_hu\|_{0,\Gamma} \lesssim h^s \quad  \text{ and } \quad \|u-\pi_hu\|_{-s,\Gamma} \lesssim h^{2s}.\]
For the generic case $s=\frac{1}{2}$, this gives the convergence behavior $h^{1/2}$ and $h^1$, respectively. We also need to estimate the approximation errors for $\bp(y_h)$ and $z(y_h)$, where $y_h$ denotes the discrete optimal potential. Since $y_h\in L^2(\Omega)$ with 
\[\|y_h\|_{0,\Omega} \le C\]
uniformly in $h$ (this directly follows from $\|y_h\|_{0,\Omega} \le \sqrt{4\widehat J_h(0) + 2\|y^d\|_{0,\Omega}^2}$) we conclude from Lemma~\ref{smooth-solutions}
\[\|(\bp_h - \bp(y_h))\cdot\bn\|_{0,\Gamma}+\kappa_z\|z_h - z(y_h)\|_{0,\Gamma} \lesssim h^{1/2}.\]

Finally, since $\bp\cdot \bn \in H^{1/2}(\Gamma)$ (see, e.g., \cite{TApel_MMateos_JPfefferer_ARosch_2015}) we may apply \eqref{ineq:quasi_interpolationb} with $s=\frac{1}{2}$. All together, we for $u\in H^{1/2}(\Gamma)$ also in the fully discrete case obtain the error estimate
\[\| u - u_h \|_{0,\Gamma}  + \|y-y_h\|_{0,\Omega} \lesssim h^{1/2}.\]

\begin{remark}\label{Cockburn-extension}
We note that in the work of Cockburn et al. \cite{BCockburn_JGopalakrishnan_FJSayas_2010} an error analysis for a LDG scheme is presented, which fits to our setting of Section \ref{sec:ldg}. From Theorem 2.1 in combination with Theorems 3.1 and 4.1 of this work it is possible to deduce the error estimates
\[
\|y-y_h\|_{0,\Omega}\le C\left\{ h^{l_y+1}|y|_{l_{y}+1,\Omega} + h^{l_q+2} (|\text{div } \bq|_{l_q,\Omega}+|\bq|_{l_q+1, \Omega})\right\}
\]
and
\[
\|\bq-\bq_h\|_{0,\Omega}\le C\left\{ h^{l_y}|y|_{l_{y}+1,\Omega} + h^{l_q+1} |\bq|_{l_q+1,\Omega}\right\},
\]
where $l_y,l_q \in [0,k]$, with $k$ denoting the polynomial degree in our LDG scheme. In the present work, we use $k=1$. In this setting, at least $H^1(\Omega)$-regularity of the fluxes is required and we obtain a more detailed version of the estimate of Lemma \ref{smooth-solutions}. In our present optimal control setting, we only have the generic regularity $\bq\in H(\mathrm{div},\Omega)$ of the optimal flux and $y\in H^1(\Omega)$ for the optimal potential. Utilizing the fact that $\bq \cdot \bn \in H^{-1/2}(\Gamma)$ in the definition of the respective projections we think that it is possible to extend the proof of \cite[Theorem 2.1]{BCockburn_JGopalakrishnan_FJSayas_2010} to our generic regularity setting, yielding the error estimates
\begin{equation}\label{y-detail}
\|y-y_h\|_{0,\Omega}\le C\left\{ h^{l_y+1}|y|_{l_{y}+1,\Omega} + h^{l_q+1} (|\mathrm{div} \, \bq|_{l_q, \Omega}+|\bq|_{l_{q}\cap H(\mathrm{div}),\Omega})\right\}
\end{equation}
and 
\begin{equation}\label{q-detail}
\|\bq-\bq_h\|_{0,\Omega}\le C\left\{ h^{l_y}|y|_{l_{y}+1,\Omega} + h^{l_q} |\bq|_{l_{q}\cap H(\mathrm{div}),\Omega}\right\},
\end{equation}
where again $l_q,l_y\in [0,k]$. For the case $k=1$ and $u\in H^{1/2}(\Omega)$ we for $l_y=l_q=0$ then get the estimate
\begin{equation}\label{L2-est-alternativ}
\|y-y_h\|_{0,\Omega}\le C h\left\{|y|_{1,\Omega} +|\bq|_{ H(\mathrm{div}),\Omega}\right\}
\end{equation}
together with a stability estimate for $\bq_h$. It is then possible to replace the estimate \eqref{L2est} in our convergence discussion by that of \eqref{L2-est-alternativ}. Moreover, only $\bq\in H^{l_q}(\Omega)\cap H(div,\Omega)$ is required. However, in the case $k=1$ estimates \eqref{y-detail}-\eqref{q-detail} do not deliver improved error estimates for the errors in optimal control and state, since the regularity of the adjoint variables obtained for $s\in [\frac{1}{2},\frac{3}{2})$ from Theorem \ref{thm:higher_reg} does not contribute to improved convergence order in the case $k=1$. This would only pay off for $k\ge 2$, compare also with the case $k\ge 1$ of \cite{WGong_WHu_MMateos_JSingler_XZhang_YZhang_2018} which would relate to the case $k\ge 2$ of our setting. We leave it to the interested reader to inspect the details.
\end{remark}
\z{To conclude, for both control discretization strategies in the case $k=1$, we only obtain convergence estimates with leading factor $h^{1/2}$, so that the variational discretization concept in fact simplifies the numerical analysis, but does not lead to improved convergence estimates, as it is frequently observed in PDE-constrained optimization \cite[Chapter 3]{MHinze_RPinnau_MUlbrich_SUlbrich_2009a}.} 
\section{Numerical Experiments} \label{sec:numeric}

In this section, we provide numerical experiments to demonstrate the performance of the local discontinuous Galerkin discretization and to underpin the theoretical findings established for the Dirichlet boundary control problem. All numerical results are obtained using piecewise linear approximations for the state $y$, the state flux $\bq$, the adjoint $z$, the adjoint flux $\bp$, and the control $u$. \x{Unless otherwise stated, the initial mesh is constructed by partitioning the domain $\Omega$ into a $2 \times 2$ array of uniform squares, each of which is subsequently subdivided into two triangles. The resulting triangulation is then uniformly refined by subdividing each triangle into four congruent sub-triangles; see Fig.~\ref{Fig:regular_mesh}.} To solve the discretized  control constrained problem, the primal-dual active set algorithm is applied as a semismooth Newton method; see, e.g., \cite{MBergounioux_KIto_KKunisch_1999a}. The optimization procedure is terminated when two consecutive active sets coincide. Moreover, the experimental order of convergence is computed using
\[
\hbox{rate} = \frac{1}{\ln 2} \ln \left( \frac{\|e(h)\|_{0,\Omega}}{\|e(h/2)\|_{0,\Omega}}\right),
\]
where $e(h)$ denotes the error associated with a triangulation of mesh size $h$. \x{Although not discussed in Section \ref{discussion}, for the convenience of the reader, in all tables in addition to $\|y-y_h\|_{0,\Omega}$ and $\|u-u_h\|_{0,\Gamma}$ we also report the numerically observed convergence orders for $\|z-z_h\|_{0,\Gamma}$ and $\|(\bp-\bp_h)\cdot \mathbf n\|_{0,\Gamma}$.}

\begin{figure}[H]
\begin{center}
\includegraphics[width=0.8\textwidth]{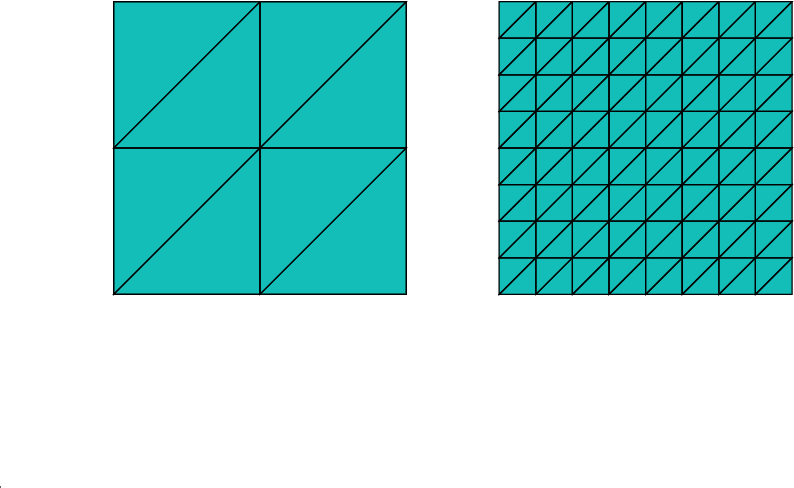}
\vspace{-3cm}
\caption{Initial mesh (left) and  uniformly refined mesh (right).}
\label{Fig:regular_mesh}
\end{center}
\end{figure}

\subsection{Example 1}\label{Ex1}

Our first example is a modified form of the elliptic problem in \cite{SMay_RRannacher_BVexler_2013} and represents an unconstrained problem with analytical solutions. The problem data are chosen as
\[
\Omega=(0,1) \times (0,1),  \quad \beta = (1,1)^T, \quad \alpha=1, \quad \omega=1.
\]
The source function \(f(x_1, x_2)\) and the desired state \(y^d(x_1, x_2)\) are generated so that the analytical solutions for the state \(y\), adjoint \(z\), and control \(u\) are given by
\begin{eqnarray*}
y(x_1,x_2) &=& - \frac{\epsilon^{1/2}}{\omega} \left( x_1 (1-x_1) + x_2 (1-x_2) \right), \\
z(x_1,x_2) &=&  \epsilon^{-1/2}x_1 x_2(1-x_1)(1-x_2), \\
u(x_1,x_2) &=& - \frac{\epsilon^{1/2}}{\omega} \left( x_1 (1-x_1) + x_2 (1-x_2) \right),
\end{eqnarray*}
respectively.

\x{In the present example we have a polynomial exact solution of the optimal control problem. For the LDG approximation errors of these functions, we may expect the best possible order in terms of the mesh size of the finite element mesh. Inspection of the errors on the right-hand side of estimate \eqref{gge} shows that then, the limiting factors in the case $\epsilon=1$ are the terms $\|(\bp-\bp_h(u))\cdot \mathbf n\|_{0,\Gamma}$ and $\kappa_z\|z-z_h(u)\|_{0,\Gamma}$, where $\kappa_z = \frac{-\epsilon^{3/2}}{h}+\chi_{\Gamma^-}|\beta\cdot\bn|$. Numerical tests (not explicitly displayed here) performed for $\epsilon \in [10^{-8},1]$ show that $\|(\bp-\bp_h(u))\cdot \mathbf n\|_{0,\Gamma}$ converges with order one, and $\|z-z_h(u)\|_{0,\Gamma}$ with order two. This behavior is also observed for the approximation of the state $y$ and the flux $\bq$. It is also expected that the projection errors of $u$ converge with best possible order. This altogether for $\epsilon =1$ could explain the convergence order of one for the $L^2(\Gamma)-$error in the optimal control. The $L^2-$error in the optimal state however seems to converge with the higher order of $3/2$. If one compares to the expected rates in the classical finite element approach taken in e.g., \cite{SMay_RRannacher_BVexler_2013}, we observe that our LDG errors in the optimal control with order one and optimal state with order 3/2 behave as the convergence orders proven there. We observe this behaviour of the optimal LDG approximation also in the subsequent examples. With decreasing  $\epsilon$ the convergence behaviour of control and state firstly start to improve and then for further decrease of $\epsilon$ become irregular. This may be explained by the fact that for very small $\epsilon$, one approximates zero functions. In contrast, the convergence behaviour of the adjoint state remains stable.}

\begin{figure}[H]
\begin{center}
\includegraphics[width=1\textwidth]{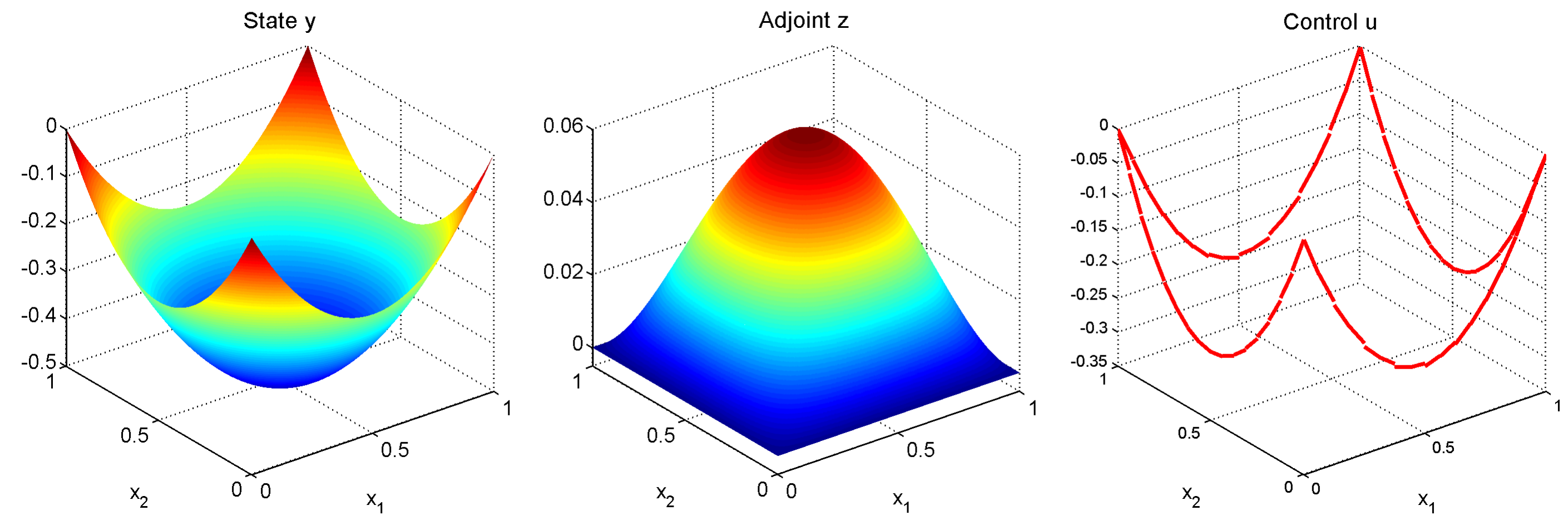}
\caption{Example~\ref{Ex1}: Computed solutions of state $y$, adjoint $z$, and control $u$ (from left to right) with $\epsilon=1$.}
\label{Fig:Ex1_unc}
\end{center}
\end{figure}

\begin{table}[H]
\centering
\caption{Example~\ref{Ex1}: Errors and convergence rates with $\epsilon=1$.}
\resizebox{\textwidth}{!}
{%
\begin{tabular}{c|c | c c | c c | c c | c c}
$h/\sqrt{2}$
&\# elements 
& $\|y-y_h\|_{0,\Omega}$ & rate
& $\|u-u_h\|_{0,\Gamma}$ & rate
& $\|z-z_h\|_{0,\Gamma}$ & rate
& $\|(\bp-\bp_h)\cdot \mathbf n\|_{0,\Gamma}$ & rate \\
\hline
$2^{-1}$ & 32     & 1.64e-02 &  -   & 4.27e-02 &  -   & 2.17e-02 &  -   & 1.03e-01 &  -   \\
$2^{-2}$ & 128    & 3.73e-03 & 2.14 & 2.14e-02 & 1.00 & 5.24e-03 & 2.05 & 5.53e-02 & 0.90 \\
$2^{-3}$ & 512    & 9.97e-04 & 1.90 & 1.12e-02 & 0.94 & 1.28e-03 & 2.03 & 2.91e-02 & 0.93 \\
$2^{-4}$ & 2048   & 3.00e-04 & 1.73 & 5.80e-03 & 0.95 & 3.13e-04 & 2.03 & 1.49e-02 & 0.97 \\
$2^{-5}$ & 8192   & 9.73e-05 & 1.62 & 2.97e-03 & 0.97 & 7.73e-05 & 2.02 & 7.52e-03 & 0.99 \\
$2^{-6}$ & 32768  & 3.29e-05 & 1.56 & 1.50e-03 & 0.98 & 1.92e-05 & 2.01 & 3.78e-03 & 0.99 \\
$2^{-7}$ & 131072 & 1.14e-05 & 1.53 & 7.55e-04 & 0.99 & 4.77e-06 & 2.01 & 1.89e-03 & 1.00 \\
$2^{-8}$ & 524288 & 3.98e-06 & 1.52 & 3.79e-04 & 1.00 & 1.19e-06 & 2.00 & 9.48e-04 & 1.00 \\
\hline
\end{tabular}
}
\label{Ex1_combined_table_eps0}
\end{table}

\begin{table}[H]
\centering
\caption{Example~\ref{Ex1}: Errors and convergence rates with $\epsilon=10^{-6}$.}
\resizebox{\textwidth}{!}
{%
\begin{tabular}{c | c | c c | c c | c c | c c}
$h/\sqrt{2}$
&\# elements
& $\|y-y_h\|_{0,\Omega}$ & rate
& $\|u-u_h\|_{0,\Gamma}$ & rate
& $\|z-z_h\|_{0,\Gamma}$ & rate
& $\|(\bp-\bp_h)\cdot \mathbf n\|_{0,\Gamma}$ & rate \\
\hline
$2^{-1}$
&32
& 3.87e-02 & -
& 7.39e-02 & -
& 6.63e+00 & -
& 8.43e-02 & - \\

$2^{-2}$
&128
& 5.33e-03 & 2.86
& 1.00e-02 & 2.88
& 1.80e+00 & 1.88
& 4.54e-02 & 0.89 \\

$2^{-3}$
&512
& 8.50e-04 & 2.65
& 1.54e-03 & 2.71
& 4.70e-01 & 1.94
& 2.35e-02 & 0.95 \\

$2^{-4}$
&2048
& 1.67e-04 & 2.35
& 2.89e-04 & 2.41
& 1.20e-01 & 1.97
& 1.20e-02 & 0.97 \\

$2^{-5}$
&8192
& 4.48e-05 & 1.90
& 7.86e-05 & 1.88
& 3.03e-02 & 1.99
& 6.05e-03 & 0.99 \\

$2^{-6}$
&32768
& 1.99e-05 & 1.17
& 3.99e-05 & 0.98
& 7.62e-03 & 1.99
& 3.04e-03 & 0.99 \\

$2^{-7}$
&131072
& 1.22e-05 & 0.70
& 3.54e-05 & 0.17
& 1.91e-03 & 2.00
& 1.52e-03 & 1.00 \\

$2^{-8}$
&524288
& 4.41e-06 & 1.47
& 2.30e-05 & 0.63
& 4.78e-04 & 2.00
& 7.67e-04 & 0.99 \\

\hline
\end{tabular}
}
\label{Ex1_combined_table_eps6}
\end{table}

Fig.~\ref{Fig:Ex1_unc} shows the computed solutions of the state $y$, adjoint $z$, and control $u$ on a fine mesh with 524288 elements. Tab.~\ref{Ex1_combined_table_eps0} and Tab.~\ref{Ex1_combined_table_eps6} present the $L^2(\Omega)$ errors of the state variable $y$, as well as the $L^2(\Gamma)$ errors of the control $u$, the adjoint state $z$, and the adjoint flux $\bp$, corresponding to $\epsilon = 1$ and $\epsilon = 10^{-6}$, respectively.

\subsection{Example 2}\label{Ex_singular}

The following example, posed on the unit square $\Omega=(0,1) \times (0,1)$, is adopted from \cite{ECasas_JPRaymond_2006b,WGong_NYan_2011b} and does not admit an explicit analytical solution.  The remaining problem data are specified as
\[
f=0, \quad y^d = \frac{1}{(x_1^2 + x_2^2)^{1/3}}, \quad \beta=(1,1)^T, \quad \alpha=1, \quad \omega=1.
\]
The admissible control set is defined by
\[
\Uad = \bigl\{ u \in L^2(\Gamma) : 0 \leq u(x) \leq 0.2 \;\; \text{a.e. } x \in \Gamma \bigr\}.
\]
Here, the largest interior angle is $\theta=\frac{\pi}{2}$ and $y^d \in H^{1/3 - \eta}(\Omega)$ for any $\eta >0$ due to the singularity on the boundary. Consequently, from Theorem~\ref{thm:higher_reg}, it follows that $s \in [\frac{1}{2}, \frac{5}{6})$.

\begin{figure}[H]
\begin{center}
\includegraphics[width=1\textwidth]{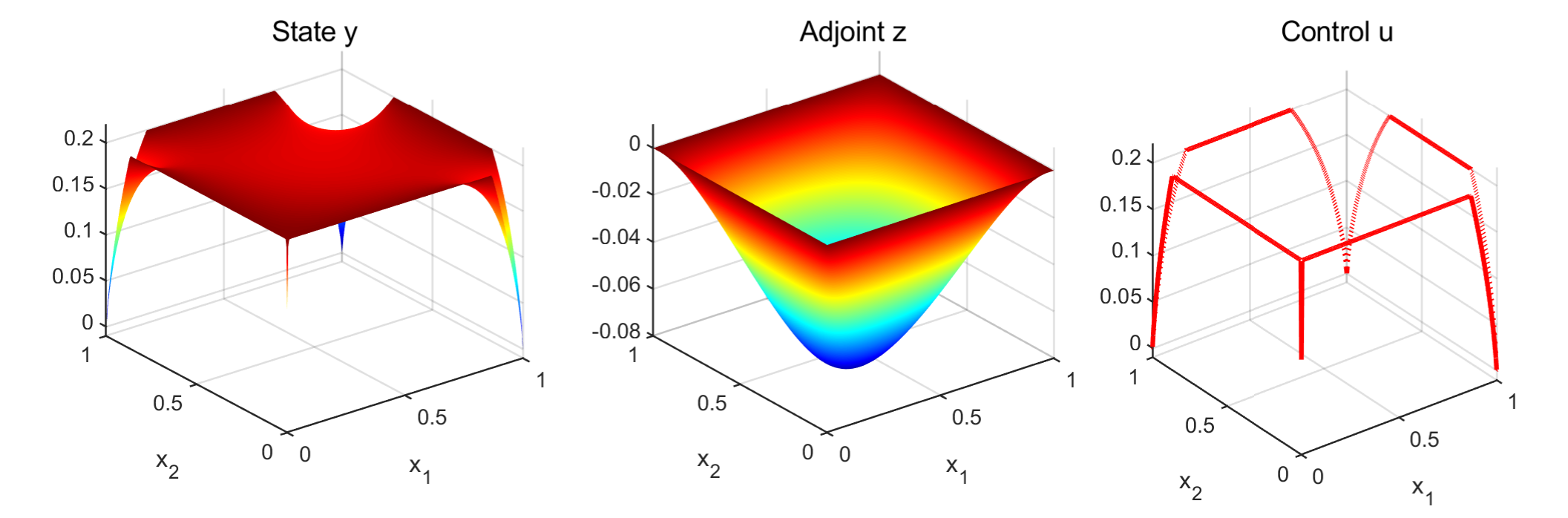}
\caption{Example~\ref{Ex_singular}: Computed solutions of state $y$ and adjoint $z$ with $\epsilon=1$.}
\label{Fig:Ex_singular_eps0}
\end{center}
\end{figure}

\begin{table}[H]
\centering
\caption{Example~\ref{Ex_singular}: Errors and convergence rates with $\epsilon=1$.}
\resizebox{\textwidth}{!}
{%
\begin{tabular}{c |c | c c | c c | c c | c c}
$h/\sqrt{2}$
&\# elements 
& $\|y-y_h\|_{0,\Omega}$ & rate
& $\|u-u_h\|_{0,\Gamma}$ & rate
& $\|z-z_h\|_{0,\Gamma}$ & rate
& $\|(\bp-\bp_h)\cdot \mathbf n\|_{0,\Gamma}$ & rate \\
\hline
$2^{-1}$
&32     
& 3.09e-02 & -    
& 1.14e-01 & -    
& 2.41e-02 & -    
& 1.79e-01 & -    \\

$2^{-2}$
&128    
& 1.35e-02 & 1.20 
& 6.59e-02 & 0.79 
& 6.97e-03 & 1.79 
& 1.12e-01 & 0.68 \\

$2^{-3}$
&512    
& 5.81e-03 & 1.21 
& 3.38e-02 & 0.96 
& 1.86e-03 & 1.90 
& 6.53e-02 & 0.78 \\

$2^{-4}$
&2048   
& 2.53e-03 & 1.20 
& 1.74e-02 & 0.96 
& 4.89e-04 & 1.93 
& 3.66e-02 & 0.84 \\

$2^{-5}$
&8192   
& 1.21e-03 & 1.07 
& 1.00e-02 & 0.80 
& 1.28e-04 & 1.94 
& 1.97e-02 & 0.89 \\

$2^{-6}$
&32768  
& 5.91e-04 & 1.03 
& 6.54e-03 & 0.61 
& 3.24e-05 & 1.98 
& 1.01e-02 & 0.96 \\

$2^{-7}$
&131072 
& 2.62e-04 & 1.17 
& 3.74e-03 & 0.81 
& 6.95e-06 & 2.22 
& 4.62e-03 & 1.13 \\

\hline
\end{tabular}
}
\label{Ex2_combined_table_eps0}
\end{table}

\begin{table}[H]
\centering
\caption{Example~\ref{Ex_singular}: Errors and convergence rates with $\epsilon=10^{-4}$.}
\resizebox{\textwidth}{!}
{%
\begin{tabular}{c |c | c c | c c | c c | c c}
$h/\sqrt{2}$
&\# elements 
& $\|y-y_h\|_{0,\Omega}$ & rate
& $\|u-u_h\|_{0,\Gamma}$ & rate
& $\|z-z_h\|_{0,\Gamma}$ & rate
& $\|(\bp-\bp_h)\cdot \mathbf n\|_{0,\Gamma}$ & rate \\
\hline
$2^{-1}$
&32     
& 1.83e-02 & -    
& 6.04e-02 & -    
& 3.21e-01 & -    
& 2.08e+01 & -    \\

$2^{-2}$
&128    
& 8.93e-03 & 1.03 
& 3.20e-02 & 0.92 
& 2.63e-01 & 0.29 
& 2.06e+01 & 0.01 \\

$2^{-3}$
&512    
& 4.49e-03 & 0.99 
& 1.77e-02 & 0.85 
& 2.31e-01 & 0.18 
& 2.02e+01 & 0.03 \\

$2^{-4}$
&2048   
& 2.44e-03 & 0.88 
& 1.07e-02 & 0.73 
& 2.10e-01 & 0.14 
& 1.94e+01 & 0.06 \\

$2^{-5}$
&8192   
& 1.58e-03 & 0.63 
& 6.85e-03 & 0.64 
& 1.88e-01 & 0.16 
& 1.78e+01 & 0.12 \\

$2^{-6}$
&32768  
& 1.19e-03 & 0.41 
& 4.25e-03 & 0.69 
& 1.54e-01 & 0.29 
& 1.49e+01 & 0.26 \\

$2^{-7}$
&131072 
& 8.37e-04 & 0.51 
& 1.92e-03 & 1.15 
& 9.60e-02 & 0.68 
& 9.45e+00 & 0.66 \\

\hline
\end{tabular}
}
\label{Ex2_combined_table_eps4}
\end{table}

The problem is solved numerically on a fine mesh consisting of 524288 elements (that is, $h=2^{-8}/\sqrt{2}$ and 1572864 degrees of freedom), and the resulting solution is used as a reference for comparison with solutions computed on coarser meshes. Fig.~\ref{Fig:Ex_singular_eps0} displays the numerical solutions computed on the reference mesh for $\epsilon=1$. \x{The numerical results reported in Tab.~\ref{Ex2_combined_table_eps0} for $\epsilon=1$
again show higher convergence rates for $\|u-u_h\|_{0,\Gamma}$ and $\|y-y_h\|_{0,\Omega}$. Let us note that similar observations are also reported for a related example in \cite[Tab.4]{WGong_WHu_MMateos_JSingler_XZhang_YZhang_2018}, which corresponds to our numerical setting. As for the previous example, we observe that the convergence rates are in good agreement with those proved for the classical finite element approach; compare e.g., \cite[Example~3]{SMay_RRannacher_BVexler_2013}. However, from the numerical results reported in Tab.~\ref{Ex2_combined_table_eps4} for $\epsilon=10^{-4}$ it is difficult to deduce an error behavior from the estimate \eqref{gge}, since we expect that the exact state and adjoint develop boundary layers, so that besides the explicit appearance of $\epsilon$ in this estimate one also has to take into account the growth of the respective $H^2$-seminorms of $y$ and $z$ in the estimation of $\|y-y_h(u)\|_{0,\Omega}$ and $\|z-z_h(u)\|_{0,\Gamma}$, which in the case of boundary layers grow critically with decreasing $\epsilon$.}


\subsection{Example 3}\label{Ex_polygonal}

Our last example, adapted from \cite[Example~2]{SMay_RRannacher_BVexler_2013}, is formulated on a polygonal domain with maximum interior angle $\theta = \frac{5}{6} \pi$, as depicted in Fig.~\ref{Fig:Ex_polygonal}.  The remaining problem data are
\[
y^d = \begin{cases}
-1, & 0 \leq x_2 < 0.5, \\
1, & 0.5 \leq x_2 < 1,
\end{cases}
\quad
f = 1, \quad \beta = (1,0)^T, \quad \alpha = 2, \quad \omega = 1.
\]
The admissible control set is defined by
\[
\Uad = \bigl\{ u \in L^2(\Gamma) : 0 \leq u(x) \;\; \text{a.e. } x \in \Gamma \bigr\}.
\]
In this example, $y^d \in H^{1/2-\eta}(\Omega)$ for any $\eta>0$, and the maximum interior angle is $\theta = \frac{5\pi}{6}$. Consequently, it follows that $s \in [\frac{1}{2},\frac{7}{10})$ from Theorem~\ref{thm:higher_reg}.

\begin{figure}[htp!]
  \centering
  \begin{subfigure}[b]{0.30\textwidth}
    \centering
    \begin{tikzpicture}[scale=0.55]
      \draw[->] (2,2) -- (10,2) node[below] {\footnotesize $x_1$};
      \draw[->] (6,2) -- (6,6)  node[left]  {\footnotesize $x_2$};
      \draw (9,2) -- (9,5);
      \draw (9,5) -- (3,5);
      \draw (6,2) -- (3,5);
      \draw[->] (6.3,2) arc (00:150:.24cm);
      \node[] at (9,1.8)  {\footnotesize $1$};
      \node[] at (5.8,5.2){\footnotesize $1$};
      \node[] at (8,4)    {$\Omega$};
      \node[] at (6.2,2.4){$\theta$};
      \draw[dashed] (3,5) -- (3,2);
      \node[] at (3,1.8)  {\footnotesize $-\sqrt{3}$};
    \end{tikzpicture}
    \caption{Domain with $\theta=\frac{5}{6}\pi$.}
    \label{Fig:ex3_domain}
  \end{subfigure}
  \hfill
  \begin{subfigure}[b]{0.65\textwidth}
    \centering
    \includegraphics[width=\textwidth]{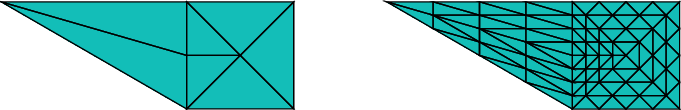}
    \caption{Initial mesh (left) and uniformly refined mesh (right).}
    \label{Fig:ex3_mesh}
  \end{subfigure}
  \caption{Example~\ref{Ex_polygonal}: Domain and meshes.}
  \label{Fig:Ex_polygonal}
\end{figure}

\begin{figure}[H]
\centering
\includegraphics[width=1.10\textwidth,trim=80 80 0 20,clip]{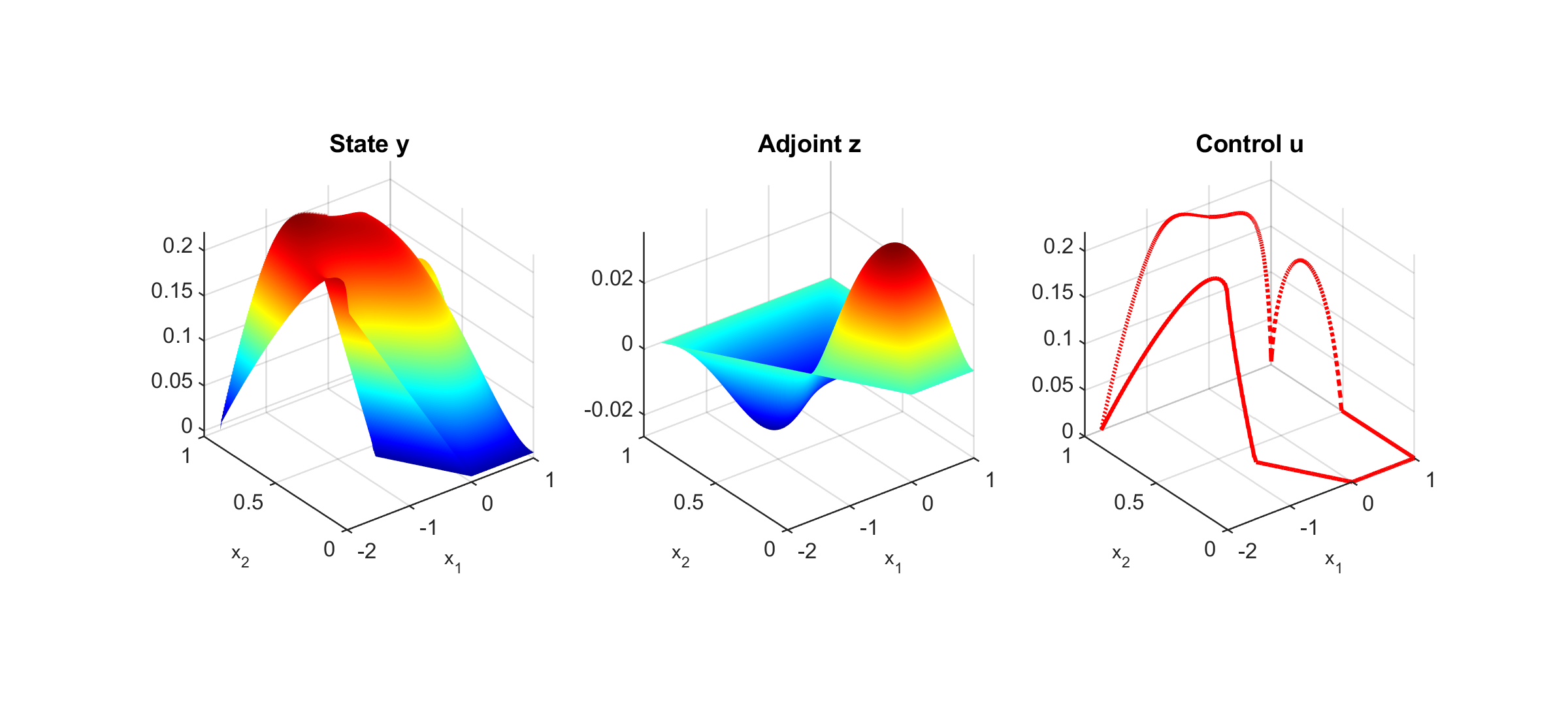}
\caption{Example~\ref{Ex_polygonal}: Computed solutions for the state $y$ and the adjoint state $z$ with $\epsilon=1$.}
\label{Fig:Ex_polygonal_eps1}
\end{figure}

\begin{table}[H]
\centering
\caption{Example~\ref{Ex_polygonal}: Errors and convergence rates with $\epsilon=1$.}
\resizebox{\textwidth}{!}
{%
\begin{tabular}{c |c | c c | c c | c c | c c}
$h$
&\# elements 
& $\|y-y_h\|_{0,\Omega}$ & rate
& $\|u-u_h\|_{0,\Gamma}$ & rate
& $\|z-z_h\|_{0,\Gamma}$ & rate
& $\|(\bp-\bp_h)\cdot \mathbf n\|_{0,\Gamma}$ & rate \\
\hline
$2^{0}$
&28     
& 4.75e-02 & -    
& 1.38e-01 & -    
& 1.80e-02 & -    
& 2.27e-01 & -    \\

$2^{-1}$
&112    
& 2.78e-02 & 0.77 
& 9.35e-02 & 0.56 
& 5.02e-03 & 1.85 
& 1.39e-01 & 0.71 \\

$2^{-2}$
&448    
& 1.34e-02 & 1.06 
& 5.36e-02 & 0.80 
& 1.45e-03 & 1.79 
& 7.88e-02 & 0.82 \\

$2^{-3}$
&1792   
& 6.64e-03 & 1.01 
& 2.78e-02 & 0.95 
& 4.11e-04 & 1.82 
& 4.28e-02 & 0.88 \\

$2^{-4}$
&7168   
& 3.27e-03 & 1.02 
& 1.37e-02 & 1.02 
& 1.12e-04 & 1.88 
& 2.27e-02 & 0.91 \\

$2^{-5}$
&28672  
& 1.57e-03 & 1.06 
& 6.37e-03 & 1.10 
& 2.87e-05 & 1.96 
& 1.16e-02 & 0.97 \\

$2^{-6}$
&114688 
& 6.95e-04 & 1.18 
& 2.62e-03 & 1.28 
& 6.16e-06 & 2.22 
& 5.19e-03 & 1.16 \\

\hline
\end{tabular}
}
\label{Ex_polygonal_combined_table_eps0}
\end{table}

\begin{table}[H]
\centering
\caption{Example~\ref{Ex_polygonal}: Errors and convergence rates with $\epsilon=10^{-4}$.}
\resizebox{\textwidth}{!}
{%
\begin{tabular}{c |c | c c | c c | c c | c c}
$h$
&\# elements 
& $\|y-y_h\|_{0,\Omega}$ & rate
& $\|u-u_h\|_{0,\Gamma}$ & rate
& $\|z-z_h\|_{0,\Gamma}$ & rate
& $\|(\bp-\bp_h)\cdot \mathbf n\|_{0,\Gamma}$ & rate \\
\hline
$2^{0}$
&28     
& 8.28e-02 & -    
& 4.16e-02 & -    
& 6.08e-01 & -    
& 1.50e+01 & -    \\

$2^{-1}$
&112    
& 5.82e-02 & 0.51 
& 4.17e-02 & -0.00 
& 6.12e-01 & -0.01 
& 1.48e+01 & 0.02 \\

$2^{-2}$
&448    
& 4.58e-02 & 0.35 
& 3.75e-02 & 0.15 
& 5.82e-01 & 0.07 
& 1.45e+01 & 0.04 \\

$2^{-3}$
&1792   
& 3.74e-02 & 0.29 
& 3.12e-02 & 0.26 
& 4.80e-01 & 0.28 
& 1.38e+01 & 0.07 \\

$2^{-4}$
&7168   
& 2.79e-02 & 0.42 
& 2.36e-02 & 0.41 
& 3.22e-01 & 0.58 
& 1.25e+01 & 0.15 \\

$2^{-5}$
&28672  
& 1.68e-02 & 0.74 
& 1.45e-02 & 0.70 
& 2.13e-01 & 0.59 
& 1.00e+01 & 0.31 \\

$2^{-6}$
&114688 
& 8.19e-03 & 1.03 
& 6.49e-03 & 1.16 
& 1.20e-01 & 0.82 
& 5.95e+00 & 0.75 \\

\hline
\end{tabular}
}
\label{Ex_polygonal_combined_table_eps4}
\end{table}

The reference solutions have been computed on a fine mesh with 458752 elements (that is, 1376256 degrees of freedom); see Fig.~\ref{Fig:Ex_polygonal_eps1} for the case $\epsilon=1$. \x{In this case, we again observe better convergence rates than predicted by our theory. For the state we observe a convergence rate close to 1.1, which is the rate proven in \cite{SMay_RRannacher_BVexler_2013} for the classical finite element approach. According to this reference, the control error for the classical approach should converge with order 0.6. Here, however, we observe an order between 1 and 1.2. Concerning the behaviour for small $\epsilon$, a discussion similar to that of Example~\ref{Ex_singular} applies. The numerical results are reported in Tab.~\ref{Ex_polygonal_combined_table_eps0} for $\epsilon=1$ and Tab.~\ref{Ex_polygonal_combined_table_eps4} for $\epsilon=10^{-4}$.}

\section{Conclusions}\label{sec:conclusion} 

In this work, we have investigated Dirichlet boundary control of a convection--diffusion equation with $L^2$-boundary controls subject to pointwise constraints, discretized using the LDG method. The LDG framework naturally incorporates the Dirichlet boundary conditions into the variational formulation, even when the control space is chosen as $L^2(\Gamma)$. We have derived \x{general a priori} error estimates for both the fully discrete and the variational discrete approximation of the optimal control problem in convex polygonal domains and presented numerical results that for moderate sizes of $\epsilon$ underpin the theoretical predictions \x{for the numerical approximation of Dirichlet boundary control problems}. As future work, the development of a posteriori error estimates and adaptive discontinuous Galerkin methods could help to further robustify the numerical treatment in the convection-dominated case.


\begin{thebibliography}{10}

\bibitem{RAAdams_1975}
R.~A. Adams.
\newblock {\em Sobolev Spaces}.
\newblock Academic Press, Orlando, San Diego, New-York, 1975.

\bibitem{TApel_MMateos_JPfefferer_ARosch_2015}
T.~Apel, M.~Mateos, J.~Pfefferer, and A.~R\"osch.
\newblock On the regularity of the solutions of {D}irichlet optimal control
  problems in polygonal domains.
\newblock {\em SIAM J. Control Optim.}, 53(6):3620--3641, 2015.

\bibitem{TApel_MMateos_JPfefferer_ARosch_2018}
T.~Apel, M.~Mateos, J.~Pfefferer, and A.~R\"osch.
\newblock Error estimates for {D}irichlet control problems in polygonal
  domains: quasi--uniform meshes.
\newblock {\em Math. Control and Relat. F.}, 8(1):217--245, 2018.

\bibitem{NArada_ECasas_FTroeltzsch_2002a}
N.~Arada, E.~Casas, and F.~Tr{\"o}ltzsch.
\newblock Error estimates for the numerical approximation of a semilinear
  elliptic control problem.
\newblock {\em Comput. Optim. Appl.}, 23(2):201--229, 2002.

\bibitem{NArada_JPRaymond_2002a}
N.~Arada and J.-P. Raymond.
\newblock Dirichlet boundary control of semilinear parabolic equations. {I}.
  {P}roblems with no state constraints.
\newblock {\em Appl. Math. Optim.}, 45(2):125--143, 2002.

\bibitem{FBBelgacem_HEFekih_HMetoui_2003}
F.~B. Belgacem, H.~E. Fekih, and H.~Metoui.
\newblock Singular perturbation for the {D}irichlet boundary control of
  elliptic problems.
\newblock {\em M2AN Math. Model. Numer. Anal.}, 37:883--850, 2003.

\bibitem{PBenner_HYucel_2017}
P.~Benner and H.~Y\"ucel.
\newblock Adaptive symmetric interior penalty {G}alerkin method for boundary
  control problems.
\newblock {\em SIAM J. Numer. Anal.}, 55(2):1101--1133, 2017.

\bibitem{MBergounioux_KIto_KKunisch_1999a}
M.~Bergounioux, K.~Ito, and K.~Kunisch.
\newblock Primal-dual strategy for constrained optimal control problems.
\newblock {\em SIAM J. Control Optim.}, 37(4):1176--1194, 1999.

\bibitem{ABuffa_TJRHughes_GSangalli_2006a}
A.~Buffa, T.~J.~R. Hughes, and G.~Sangalli.
\newblock Analysis of a multiscale discontinuous {G}alerkin method for
  convection-diffusion problems.
\newblock {\em SIAM J. Numer. Anal.}, 44(4):1420--1440, 2006.

\bibitem{CCarstensen_1999}
C.~Carstensen.
\newblock Quasi-interpolation and a posteriori error analysis in finite element
  methods.
\newblock {\em ESAIM:M2AN}, 36:1197--1202, 1999.

\bibitem{ECasas_MMateos_2008a}
E.~Casas and M.~Mateos.
\newblock Error estimates for the numerical approximation of {N}eumann control
  problems.
\newblock {\em Comput. Optim. Appl.}, 39(3):265--295, 2008.

\bibitem{ECasas_MMateos_JPRaymond_2009}
E.~Casas, M.~Mateos, and J.-P. Raymond.
\newblock Penalization of {D}irichlet optimal control problems.
\newblock {\em ESAIM Control Optim. Calc. Var.}, 15:782--809, 2009.

\bibitem{ECasas_JPRaymond_2006b}
E.~Casas and J.-P. Raymond.
\newblock Error estimates for the numerical approximation of {D}irichlet
  boundary control for semilinear elliptic equations.
\newblock {\em SIAM J. Control Optim.}, 45(5):1586--1611, 2006.

\bibitem{PCastillo_BCockburn_IPergugia_DSchotzau_2000}
P.~Castillo, B.~Cockburn, I.~Pergugia, and D.~Sch{\"o}tzau.
\newblock An a priori error analysis of the local discontinuous {G}alerkin
  method for elliptic problems.
\newblock {\em SIAM J. Numer. Anal.}, 38:1676--1706, 2000.

\bibitem{PCastillo_BCockburn_DSchotzau_CSchwab_2002}
P.~Castillo, B.~Cockburn, D.~Sch{\"o}tzau, and C.~Schwab.
\newblock Optimal a priori error estimates for the hp-version of the local
  discontinuous {G}alerkin method for convection-diffusion problems.
\newblock {\em Math. Comp.}, 71:455--478, 2002.

\bibitem{HChen_JRSingler_YZhang_2019}
H.~Chen, J.~R. Singler, and Y.~Zhang.
\newblock An {HDG} method for {Dirichlet} boundary control of convection
  dominated diffusion {PDEs}.
\newblock {\em SIAM J. Numer. Anal.}, 57(4):1919--1946, 2019.

\bibitem{YCheng_CWShu_2010}
Y.~Cheng and C.-W. Shu.
\newblock Superconvergence of discontinuous {G}alerkin and local discontinuous
  {G}alerkin schemes for linear hyperbolic and convection-diffusion equations
  in one space dimension.
\newblock {\em SIAM J. Numer. Anal.}, 47(6):4044--4072, 2010.

\bibitem{SChowdhury_TGudi_AKNandakumaran_2017}
S.~Chowdhury, T.~Gudi, and A.~K. Nandakumaran.
\newblock Error bounds for a {D}irichlet boundary control problem based on
  energy spaces.
\newblock {\em Math. Comput.}, 86:1103--1126, 2017.

\bibitem{BCockburn_JGopalakrishnan_RLazarov_2009}
B.~Cockburn, J.~Gopalakrishnan, and R.~Lazarov.
\newblock Unified hybridization of discontinuous {G}alerkin, mixed, and
  continuous {G}alerkin methods for second order elliptic problems.
\newblock {\em SIAM J. Numer. Anal.}, 47(2):1319--1365, 2009.

\bibitem{BCockburn_JGopalakrishnan_FJSayas_2010}
B.~Cockburn, J.~Gopalakrishnan, and F.~J. Sayas.
\newblock A projection-based error analysis of {HDG} methods.
\newblock {\em Math. Comp.}, 79(271):1351--1367, 2010.

\bibitem{BCockburn_GKanschat_DSchotzau_2004a}
B.~Cockburn, G.~Kanschat, and D.~Sch{\"o}tzau.
\newblock The local discontinuous {G}alerkin method for the {O}seen equations.
\newblock {\em Math. Comp.}, 73(246):569--593, 2004.

\bibitem{BCockburn_CWShu_1998b}
B.~Cockburn and C.-W. Shu.
\newblock The local discontinuous {G}alkerin method for time-dependent
  convection-diffusion systems.
\newblock {\em SIAM J. Numer. Anal.}, 35:2440--2463, 1998.

\bibitem{CCorekli_2022}
C.~Corekli.
\newblock The {SIPG} method of {Dirichlet} boundary optimal control problems
  with weakly imposed boundary conditions.
\newblock {\em AIMS Math.}, 7(4):6711--6742, 2022.

\bibitem{KDeckelnick_AGunther_MHinze_2009}
K.~Deckelnick, A.~G\"unther, and M.~Hinze.
\newblock Finite element approximation of {D}irichlet boundary control for
  elliptic {PDEs} on two--and three--dimensional curved domains.
\newblock {\em SIAM J. Control Optim.}, 48:2798--2819, 2009.

\bibitem{SDu_XHe_2023}
S.~Du and X.~He.
\newblock Finite element approximation to optimal {D}irichlet boundary control
  problem: A priori and a posteriori error estimates.
\newblock {\em Comput. Math. Appl.}, 131:14--25, 2023.

\bibitem{AVFursikov_MDGunzburger_LSHou_1998a}
A.~V. Fursikov, M.~D. Gunzburger, and L.~S. Hou.
\newblock Boundary value problems and optimal boundary control for the
  {N}avier--{S}tokes systems: The two--dimensional case.
\newblock {\em SIAM J. Control Optim.}, 36:852--894, 1998.

\bibitem{TGeveci_1979}
T.~Geveci.
\newblock On the approximation of the solution of an optimal control problem
  governed by an elliptic equation.
\newblock {\em RAIRO Anal. Numer.}, 13:313--328, 1979.

\bibitem{WGong_WHu_MMateos_JSingler_XZhang_YZhang_2018}
W.~Gong, W.~Hu, M.~Mateos, J.~Singler, X.~Zhang, and Y.~Zhang.
\newblock A new {HDG} method for {D}irichlet boundary control of convection
  diffusion {PDEs} {II}: Low regularity.
\newblock {\em SIAM J. Numer. Anal.}, 56(4):2262--2287, 2018.

\bibitem{WGong_WHu_MMateos_JRSingler_YZhang_2020}
W.~Gong, W.~Hu, M.~Mateos, J.~R. Singler, and Y.~Zhang.
\newblock Analysis of a hybridizable discontinuous {G}alerkin scheme for the
  tangential control of the {S}tokes system.
\newblock {\em ESAIM: M2AN}, 54(6):2229--2264, 2020.

\bibitem{WGong_MMateos_JSingler_YZhang_2022}
W.~Gong, M.~Mateos, J.~Singler, and Y.~Zhang.
\newblock Analysis and approximations of {D}irichlet boundary control of
  {S}tokes flows in the energy space.
\newblock {\em SIAM J. Numer. Anal.}, 60(1):450--474, 2022.

\bibitem{WGong_NYan_2011b}
W.~Gong and N.~Yan.
\newblock Mixed finite element method for {D}irichlet boundary control problem
  governed by elliptic {PDE}s.
\newblock {\em SIAM J. Control Optim.}, 49(3):984--1014, 2011.

\bibitem{MHinze_2005a}
M.~Hinze.
\newblock A variational discretization concept in control constrained
  optimization: the linear-quadratic case.
\newblock {\em Comput. Optim. Appl.}, 30:45--63, 2005.

\bibitem{MHinze_KKunisch_2004}
M.~Hinze and K.~Kunisch.
\newblock Second order methods for boundary control of the instationary
  {N}avier--{S}tokes system.
\newblock {\em ZAMM Z. Angew. Math. Mech.}, 84:171--187, 2004.

\bibitem{MHinze_UMatthes_2009a}
M.~Hinze and U.~Matthes.
\newblock A note on variational discretizatio of elliptic {N}eumann boundary
  control.
\newblock {\em Control Cybern.}, 38(3):577--591, 2009.

\bibitem{MHinze_RPinnau_MUlbrich_SUlbrich_2009a}
M.~Hinze, R.~Pinnau, M.~Ulbrich, and S.~Ulbrich.
\newblock {\em Optimization with PDE Constraints}, volume~23 of {\em
  Mathematical Modelling: Theory and Applications}.
\newblock Springer, 2009.

\bibitem{WHu_MMateos_JSingler_YZhang_2018}
W.~Hu, M.~Mateos, J.~Singler, and Y.~Zhang.
\newblock A new {HDG} method for {D}irichlet boundary control of convection
  diffusion {PDEs} {I}: High regularity.
\newblock Technical report, 2018.
\newblock arXiv:1801.01461v1.

\bibitem{WWHu_JGShen_JRSingler_YWZhang_XBZheng_2020}
W.~W. Hu, J.G. Shen, J.~R. Singler, Y.W. Zhang, and X.~B. Zheng.
\newblock A superconvergent hybridizable discontinuous {G}alerkin method for
  {D}irichlet boundary control of elliptic {PDEs}.
\newblock {\em Numer. Math.}, 144:375--411, 2020.

\bibitem{JJiang_NJWalkington_YYue_2025}
J.~Jiang, N.~J. Walkington, and Y.~Yue.
\newblock Stability and convergence of {HDG} schemes under minimal regularity.
\newblock {\em SIAM J. Numer. Anal.}, 63(5):2048--2071, 2025.

\bibitem{MKarkulik_2020}
M.~Karkulik.
\newblock A finite element method for elliptic {D}irichlet boundary control
  problems.
\newblock {\em Comput. Methods Appl. Math.}, 20(4):827--843, 2020.

\bibitem{KKunisch_BVexler_2007c}
K.~Kunisch and B.~Vexler.
\newblock Constrained {D}irichlet boundary control in ${L}^2$ for a class of
  evolution equations.
\newblock {\em SIAM J. Control Optim.}, 46(5):1726--1753, 2007.

\bibitem{DLeykekhman_MHeinkenschloss_2012a}
D.~Leykekhman and M.~Heinkenschloss.
\newblock Local error analysis of discontinuous {G}alerkin methods for
  advection-dominated elliptic linear-quadratic optimal control problems.
\newblock {\em SIAM J. Numer. Anal.}, 50(4):2012--2038, 2012.

\bibitem{JLLions_1971}
J.-L. Lions.
\newblock {\em Optimal Control of Systems Governed by Partial Differential
  Equations}.
\newblock Springer, Berlin, 1971.

\bibitem{MMateos_2018}
M.~Mateos.
\newblock Optimization methods for {D}irichlet control problems.
\newblock {\em Optimization}, 67:585--617, 2018.

\bibitem{SMay_RRannacher_BVexler_2013}
S.~May, R.~Rannacher, and B.~Vexler.
\newblock Error analysis for a finite element approximation of elliptic
  {D}irichlet boundary control problems.
\newblock {\em SIAM J. Control Optim.}, 51:2585--2611, 2013.

\bibitem{GOf_TXPhan_OSteinbach_2010}
G.~Of, T.~X. Phan, and O.~Steinbach.
\newblock Boundary element methods for {D}irichlet boundary control problems.
\newblock {\em Math. Method Appl. Sci.}, 33:2187--2205, 2010.

\bibitem{JPfefferer_BVexler_2025}
J.~Pfefferer and B.~Vexler.
\newblock Numerical analysis for {D}irichlet optimal control problems on convex
  polyhedral domains.
\newblock {\em Numer. Math.}, 157:1937--1974, 2025.

\bibitem{FTroeltzsch_2010a}
F.~Tr{\"o}ltzsch.
\newblock {\em Optimal Control of Partial Differential Equations: {T}heory,
  Methods and Applications}, volume 112 of {\em Graduate Studies in
  Mathematics}.
\newblock American Mathematical Society, Providence, RI, 2010.

\bibitem{JCesenek_MFeistauer_2012}
J.~\v{C}esenek and M.~Feistauer.
\newblock Theory of the space-time discontinuous {G}alerkin method for
  nonstationary parabolic problems with nonlinear convection and diffusion.
\newblock {\em SIAM J. Numer. Anal.}, 50(3):1181--1206, 2012.

\bibitem{BVexler_DMeidner_2025}
B.~Vexler and D.~Meidner.
\newblock {\em Numerical Analysis for Elliptic Optimal Control Problems},
  volume~67 of {\em Springer Series in Computational Mathematics}.
\newblock Springer Cham, 2025.

\bibitem{MWinkler_2020}
M.~Winkler.
\newblock Error estimates for variational normal derivatives and {D}irichlet
  control problems with energy regularization.
\newblock {\em Numer. Math.}, 144:413--445, 2020.

\bibitem{HYucel_PBenner_2015a}
H.~Y\"ucel and P.~Benner.
\newblock Adaptive discontinuous {G}alerkin methods for state constrained
  optimal control problems governed by convection diffusion equations.
\newblock {\em Comput. Optim. Appl.}, 62:291--321, 2015.

\bibitem{HYucel_MHeinkenschloss_BKarasozen_2013}
H.~Y\"ucel, M.~Heinkenschloss, and B.~Karas\"{o}zen.
\newblock Distributed optimal control of diffusion-convection-reaction
  equations using discontinuous {G}alerkin methods.
\newblock In {\em Numerical Mathematics and Advanced Applications 2011}, pages
  389--397, Berlin, 2013. Springer.

\bibitem{ZZhou_XYu_NYan_2014a}
Z.~Zhou, X.~Yu, and N.~Yan.
\newblock The local discontinuous {G}alerkin approximation of
  convection-dominated diffusion optimal control problems with control
  constraints.
\newblock {\em Numer. Methods. Partial Differential Equations}, 30(1):339--360,
  2014.

\end{thebibliography}

\end{document}